\title[Bracket width of simple Lie algebras]{Bracket width of simple Lie algebras}
\author{Adrien Dubouloz, Boris Kunyavski\u\i \  and  Andriy Regeta}
\address{IMB UMR5584, CNRS, Universit\'e Bourgogne Franche-Comt\'e, \newline \indent F-21000 Dijon, France.}
\email{adrien.dubouloz@u-bourgogne.fr}
\address{Department of Mathematics, Bar-Ilan University, \newline \indent Ramat Gan 5290002, Israel.}
\email{kunyav@gmail.com }
\address{\noindent Institut f\"{u}r Mathematik, Friedrich-Schiller-Universit\"{a}t Jena, \newline
\indent  Jena 07737, Germany.}
\email{andriyregeta@gmail.com}
\thanks{ The first author was partially supported by the French ANR grant FIBALGA ANR-18-CE40-0003-01. Research of the second author was supported by the ISF grants 1623/16 and 1994/20. This research started during the visit of the second author to the MPIM (Bonn) and continued during the visit of the third author to Bar-Ilan University. The support of these institutions is gratefully acknowledged.}
\newtheorem{question}{Question}
\newtheorem{theorem}{Theorem}
\newtheorem{corollary}{Corollary}
\newtheorem{lemma}{Lemma}
\newtheorem{proposition}{Proposition}
\newtheorem{definition and proposition}{Definition and proposition}
\newtheorem{conjecture}{Conjecture}
\newtheorem*{hyp}{Hypothesis (J)}
\newtheorem*{thA}{Theorem A}
\newtheorem*{thB}{Theorem B}
\theoremstyle{definition}
\newtheorem{definition}{Definition}
\newtheorem{remark}{Remark}
\newtheorem{example}{Example}
\newcommand{\name}[1]{\textsc{#1\/}}
\newcommand{\Der}{\operatorname{Der}}
\renewcommand{\d}{{\partial}}
\DeclareMathOperator{\Div}{Div}
\DeclareMathOperator{\Ve}{Vec}
\DeclareMathOperator{\Jac}{Jac}
\DeclareMathOperator{\ad}{ad}
\DeclareMathOperator{\Aut}{Aut}
\DeclareMathOperator{\SAut}{SAut}
\DeclareMathOperator{\Ker}{Ker}
\DeclareMathOperator{\LND}{LNV}
\DeclareMathOperator{\Imm}{Im}
\def\Dp{\mathrm{D}_p}
\def\Dz(z-1){\mathrm{D}_{z(z-1)}}
\def\Dq{\mathrm{D}_q}
\newcommand{\lielnd}[1]{\langle \LND(#1) \rangle}
\def\O{{\mathcal O}}
\def \itt #1,#2:{\medskip\item[$\bullet$] 
     page\ \ignorespaces#1, line\ \ignorespaces#2:\ \ignorespaces}
\begin{document}

\begin{abstract} The notion of commutator width of a group, defined as the smallest number of commutators needed to represent each element of the derived group as their product, has been extensively studied over the past decades. In particular, in 1992 Barge and Ghys discovered the first example of a simple group of commutator width greater than one among groups of diffeomorphisms of smooth manifolds.  

We consider a parallel notion of bracket width of a Lie algebra and present the first examples of simple Lie algebras of bracket width greater than one. They are found among the algebras of algebraic vector fields on smooth affine varieties. 
\end{abstract}

\subjclass{14H52, 17B66}
\maketitle


\section{Introduction}

The notion of width in various contexts appears in many algebraic structures. It is particularly well studied in group theory where the {\it commutator width} got special attention, being related to many important properties of the class of groups under investigation. Recall that given a group $G$, the commutator width is defined as supremum of the lengths $\ell (g)$, $g$ running over the derived subgroup $[G,G]$, where $\ell (g)$ is the smallest number of commutators needed 
to represent $g$ as their product. Examples of groups of  commutator width greater than one were known to Miller more than 100 years ago, and such examples can easily be discovered even among perfect groups (i.e., groups $G$ coinciding with $[G,G]$). However, it took quite a time to discover {\it simple} groups of commutator width greater than one. As often happens, the insight came from outside: \name{Barge} and \name{Ghys} \cite{BG} have found their examples among groups of differential-geometric nature, applying deep results from that area. The groups they considered were infinite (in any {\it finite} simple group every element is a single commutator, as predicted by Ore's conjecture settled in 2010 \cite{LOST}, after more than half-a-century efforts). The interested reader can find more details in the survey \cite{GKP} and the references therein. 

The group case served for us as a prototypical example for  investigating the similar notion of {\it bracket width} in  the parallel universe of Lie algebras. It is defined in full analogy with the commutator width of a group. Given a Lie  algebra $L$ over a field $k$, we define its bracket width  as supremum of the lengths $\ell (a)$, $a$ running over the derived algebra $[L,L]$, where $\ell (a)$ is defined as the smallest number $m$ of Lie brackets $[x_i,y_i]$ needed to 
represent $a$ in the form 
$$
a=\sum_{i=1}^m[x_i,y_i]. 
$$
The bracket width applies in studying different aspects of Lie algebras, ranging from problems motivated by logic (elementary equivalence), see \cite{Rom}, to those coming from differential geometry, see \cite{LT}. Our focus is on {\it simple} Lie algebras. Here one can observe certain parallelism with the commutator width of groups: there is little hope to find  an example of a {\it finite-dimensional} 
simple Lie algebra of bracket width greater than one, though in general the problem is open (there are many cases where 
the width is known to be equal to one, and it is known that 
it cannot exceed two \cite{BN}); see \cite[Section~6]{GKP} 
for more details.   

Examples of simple Lie algebras of width greater than one should thus be sought among {\it infinite-dimensional} algebras. There are several natural families of such to be checked first. Four families of Lie algebras of Cartan type all consist of the algebras of width one, in light of the results of \name{Rudakov} \cite{Rud}. The case of (subquotients of) 
Kac--Moody algebras is open, to the best of our knowledge. In the present paper, we start the study of another classically known family, algebras $\Ve(X)$ of algebraic vector fields on  irreducible affine varieties $X$.  
It is well-known that 
the Lie algebra $\Ve(X)$ is simple if and only if $X$ is smooth (see \cite{Jor}, \cite[Proposition~1]{Sie96}). Thus our primary objects of interest are smooth affine curves and surfaces. 

It turns out that already among the Lie algebras of vector fields on smooth affine curves there are algebras of width greater than one.  Our first main result is the following:

\begin{thA} (Theorem \ref{main2}) 
Let $C$ be an irreducible smooth affine non-rational curve with a unique place at infinity and trivial tangent sheaf. Then the bracket width of the Lie algebra $\mathrm{Vec}(C)$ is larger than or equal to two. 
\end{thA}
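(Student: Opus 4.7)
The plan is to use the trivialization of the tangent sheaf to identify $\mathrm{Vec}(C)$ with $A := \mathcal{O}(C)$ as an $A$-module: every vector field is uniquely of the form $f\partial$ for a fixed generator $\partial$ of $TC$, and the Lie bracket becomes $[f\partial, g\partial] = B(f,g)\,\partial$ with $B(f,g) := f\,\partial(g) - g\,\partial(f)$. To prove the bracket width is at least $2$, it suffices to exhibit $h \in A$ such that $h\partial \in [\mathrm{Vec}(C), \mathrm{Vec}(C)]$ yet $h$ is not in the image of $B$. Since $\mathrm{Vec}(C)$ is simple, hence perfect, the first condition is automatic, and the problem reduces entirely to the image of $B$.

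To control $B$, I would introduce the pole-order filtration at the unique place $p_\infty \in \bar{C} \setminus C$. Let $d \colon A \setminus \{0\} \to \mathbb{Z}_{\geq 0}$ be the pole order at $p_\infty$, let $\Gamma \subseteq \mathbb{Z}_{\geq 0}$ be the Weierstrass semigroup of $\bar{C}$ at $p_\infty$, and let $s$ be its smallest positive element. Non-rationality ($\gamma := $ genus of $\bar{C}$ is $\geq 1$) together with the Weierstrass gap theorem guarantees $1 \notin \Gamma$, so $s \geq 2$. Because $\partial$ trivializes $TC$, its extension as a rational section of $T\bar{C}$ has divisor supported at $\{p_\infty\}$; a degree count with $\deg T\bar{C} = 2 - 2\gamma$ yields $\mathrm{div}(\partial) = (2-2\gamma)\,p_\infty$. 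A local computation in a uniformizer at $p_\infty$ then produces the degree-jump identity
$$d(\partial f) = d(f) + (2\gamma - 1) \qquad \text{for every non-constant } f \in A.$$

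The core step is to bound $d(B(f,g))$ from below. Both $f\,\partial(g)$ and $g\,\partial(f)$ have pole order $d(f) + d(g) + (2\gamma - 1)$ at $p_\infty$, and expanding leading coefficients in the uniformizer shows they differ by a factor proportional to $d(f) - d(g)$. Hence when $d(f) \neq d(g)$ (including the case where one of $f, g$ is a nonzero constant, for which $d = 0$), we have the exact equality $d(B(f,g)) = d(f) + d(g) + (2\gamma - 1)$. In the remaining case $d(f) = d(g) = m > 0$, replacing $g$ by $g' := g - \lambda f$, where $\lambda$ is the ratio of leading coefficients at $p_\infty$, leaves $B(f,g)$ unchanged while strictly lowering the pole order of the second argument; iterating reduces the problem to the previous case. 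Taking infima over $(m,n) \in \Gamma \times \Gamma$ with $m \neq n$ yields $d(B(f,g)) \geq s + (2\gamma - 1)$ for every nonzero bracket.

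Since $\gamma \geq 1$ forces $s + (2\gamma - 1) > s$, any $h \in A$ with $d(h) = s$ (which exists by definition of $s$) gives a nonzero vector field $h\partial \in \mathrm{Vec}(C) = [\mathrm{Vec}(C), \mathrm{Vec}(C)]$ whose pole order at $p_\infty$ is too small to be a single bracket, so its bracket length is at least $2$. The essential input is the degree-jump formula $d(\partial f) = d(f) + (2\gamma - 1)$, which uses all three hypotheses (unique place at infinity to have a single filtration, trivial tangent sheaf to present $\mathrm{Vec}(C)$ via one generator, and non-rationality so that $2\gamma - 1 \geq 1$ truly raises degrees); the leading-coefficient bookkeeping in the equal-degree case is the most delicate point, but the cancellation trick $g \mapsto g - \lambda f$ renders it routine.
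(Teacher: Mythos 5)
Your proposal is correct and follows essentially the same route as the paper: trivialize $\mathrm{Vec}(C)$ by a nowhere vanishing field $\tau$, filter by pole order at the unique place at infinity, establish the order formula $\mathrm{ord}_{c_\infty}([\xi,\nu])=\mathrm{ord}_{c_\infty}(\xi)+\mathrm{ord}_{c_\infty}(\nu)-1$ after the same leading-coefficient cancellation $g\mapsto g-\lambda f$, and derive a contradiction from $\gamma\geq 1$. The only cosmetic difference is the choice of witness: the paper shows $\tau$ itself (your $h$ constant, $d(h)=0$) is not a single bracket, which your bound $d(B(f,g))\geq s+2\gamma-1>0$ for nonzero brackets already yields, so the Weierstrass-gap input $s\geq 2$ is not actually needed.
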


There is an abundance of affine curves with only one place at infinity, and they were studied in various contexts, see, e.g., a recent paper of \name{Koll\'ar} \cite{Kol} and the references therein. 
A simple class of examples is given by affine hyperelliptic curves plane curves $C \subset \mathbb{A}^2$ defined by equations of the form  $y^2=h(x)$, where  $h(x)$ is a separable monic polynomial of odd degree greater than one (Example \ref{example-hyper-elliptic}), whose algebras $\Ve(C)$ have been studied by \name{Billig} and \name{Futorny}  \cite{BF}. More examples can be found at the end of Section \ref{sec:curves} below. 

\medskip

In the two-dimensional case, we consider smooth affine surfaces endowed with an algebraic volume form. In 
complex-analytic set-up, such varieties have been intensively studied in particular by \name{Kaliman} and \name{Kutzschebauch} in connection to the algebraic volume density property, see \cite{KK10}, \cite{KK16} and the references therein. Note that algebraic volume forms naturally arise once the $2n$-dimensional variety under consideration carries a symplectic form $\omega$ because in this case the $n^{\textrm{th}}$ exterior power of $\omega$ is a volume form. Looking at a symplectic surface $(S,\omega)$, we  
consider certain natural Lie subalgebras of vector fields on $S$, related to Hamiltonian vector fields, with a goal 
to establish the simplicity and compute the bracket width.  
Note that the Lie algebra of Hamiltonian vector fields on $S$ 
can be identified with the Lie algebra arising from the 
natural Poisson bracket on $\mathcal O(S)/k$, the algebra of 
regular functions on $S$ modulo scalars. This observation,  
allowing one to translate questions and statements from 
Poissonian to Hamiltonian and vice versa, is essential for 
some of our proofs.

In the present paper, we mainly focus on the particular class of  so-called Danielewski surfaces
$\Dp \subset \mathbb{A}^3$ given by the equation $xy = p(z)$ where $p$ is a polynomial without multiple roots.  
These surfaces attracted a lot of attention over the last few 
decades. They were initially used in \cite{Dan89} to present a counterexample to a generalized version of the Zariski Cancellation Problem. 
They carry a natural algebraic volume form unique up to scalar multiplication and our main object of study is the width of the Lie subalgebra  $\lielnd{\Dp}$ of their Hamiltonian vector fields generated by all locally nilpotent vector fields. By virtue of \cite[Theorem 1]{LR19}, these Lie algebras $\lielnd{\Dp}$ are simple.

It is known if $\Dp$ and $\Dq$ are isomorphic then $\deg p=\deg q$ (see e.g., \cite[Lemma 2.10]{Da04} for a more precise result) and that the Lie algebras $\lielnd{\Dp}$ and $\lielnd{\Dq}$ are isomorphic if and only if $\Dp$ and $\Dq$ are isomorphic (see \cite[Theorem 2]{LR19}).

If $\deg p =1$, then $\Dp$ is isomorphic to the affine plane $\mathbb{A}^2$.  It is  well-known that in this case $\lielnd{\Dp}$ is isomorphic to the Lie algebra of vector fields with zero divergence, and its bracket width is equal to one (see Proposition \ref{affinespace} below for a more general statement). 
We next consider the cases $\deg p=2$ and $\deg p\ge 3$ separately. In the latter case we assume in addition that a certain condition on the algebra of regular functions on $\Dp$ 
holds (this is Hypothesis (J) in Section \ref{section3.1} below, whose flavour is somewhat reminiscent of the Jacobian Conjecture). Here is our next main result. 

\begin{thB} (Proposition \ref{danielewskiwidthzero} and Theorem \ref{main1})
Let $\Dp \subset \mathbb{A}^3$ be given by the equation $xy = p(z)$ where $p$ is a polynomial without  multiple roots. 
\begin{itemize}
\item[(i)] If $\deg p = 2$, then the bracket width of  $\lielnd{\Dp}$ is at most two.
\item[(ii)] If $\deg p \ge 3$, assume in addition that Hypothesis (J) holds. Then the bracket width of  $\lielnd{\Dp}$ is greater than one.
\end{itemize}
\end{thB}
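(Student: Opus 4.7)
The plan is to translate the problem via the Poisson/Hamiltonian dictionary mentioned in the introduction. The natural algebraic volume form $\omega$ on $\Dp$ identifies the Lie algebra of Hamiltonian vector fields with $\mathcal{O}(\Dp)/k$ equipped with the induced Poisson bracket, so that $[X_\phi,X_\psi]=X_{\{\phi,\psi\}}$ and bracket-width questions translate into Poisson representation-length questions. A direct computation shows that the two canonical locally nilpotent fields $D_1=x\partial_z+p'(z)\partial_y$ and $D_2=y\partial_z+p'(z)\partial_x$ are Hamiltonian with Hamiltonians $x$ and $-y$, and by the Makar-Limanov classification every locally nilpotent vector field on $\Dp$ has Hamiltonian in $k[x]\cup k[y]$. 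Therefore $\lielnd{\Dp}$ is identified, via $\omega$, with the Poisson subalgebra of $\mathcal{O}(\Dp)/k$ generated by $k[x]+k[y]$, and the basic brackets are $\{x,y\}=p'(z)$, $\{x,z\}=x$, $\{y,z\}=-y$.

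For part (i), when $\deg p=2$ we may take $p(z)=z^2-c$, so that $\{x,y\}=2z$ and the Poisson subalgebra generated by $k[x]+k[y]$ is easily seen to coincide with all of $\mathcal{O}(\Dp)/k$. On the monomial basis $\{x^iz,\; x^i,\; y^iz,\; y^i\}_{i\geq 1}$ coming from the presentation $\mathcal{O}(\Dp)\cong k[x,y,z]/(xy-z^2+c)$, I would express each basis vector as a sum of at most two Poisson brackets. Typical computations such as $x^{i+1}z=\tfrac{1}{i+1}\{x^{i+1},z\}$ and mixed brackets of the form $\{F(x),G(y)\}=F'(x)G'(y)\cdot 2z$ handle most monomials in a single bracket; the combinations requiring a second bracket are controlled by the linearity of $p'(z)$, and a short case analysis yields the bound of at most two.

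For part (ii), when $\deg p\geq 3$, the goal is reversed: produce a single explicit element that is \emph{not} the Poisson bracket of any pair in $\lielnd{\Dp}$, thereby witnessing that the width exceeds one. Here $p'(z)$ has degree at least two, so the Poisson subalgebra generated by $k[x]+k[y]$ has a more intricate layer structure: iterated brackets of $x$ and $y$ produce the chain $p'(z),p''(z),\ldots$ along with multiplicative combinations $x^iy^jp^{(k)}(z)$. The strategy is to fix a carefully chosen test function $f$ (typically lying in $k[z]$ or of prescribed bidegree in $x,y$), assume a representation $f=\{F,G\}$, expand $F$ and $G$ in the natural filtration by bidegree in $x,y$, and compare leading terms to derive a contradiction. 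Hypothesis (J) is brought in to convert this contradiction into a clean algebraic statement: it supplies the Jacobian-type non-representability that precludes ``resonant'' cancellations between $F$ and $G$ and so excludes any candidate pair $(F,G)$ realising $f$ as a single bracket.

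The essential difficulty is part (ii): identifying the test function $f$ and leveraging Hypothesis (J) to make the non-representability rigorous. Part (i) is a direct if intricate bookkeeping whose success is made possible precisely by the linearity of $p'(z)$; as soon as $\deg p\geq 3$ this combinatorial freedom disappears and genuine obstructions emerge, which is the content of Hypothesis (J).
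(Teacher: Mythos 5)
Your overall framework---translating bracket width into Poisson representation length on $\mathcal{O}(\Dp)/k$ via the volume form, so that $[\theta_f,\theta_g]=\theta_{\{f,g\}}$---is the right one and is exactly the paper's starting point. For part (i) your plan is in the same spirit as the paper's, but it is not carried out and contains a computational slip: $\{x^{i+1},z\}=(i+1)x^{i}\{x,z\}=(i+1)x^{i+1}$, so your sample formula $x^{i+1}z=\tfrac{1}{i+1}\{x^{i+1},z\}$ is false. The clean organizing fact you are missing is the explicit description $E_\omega(\Dp)=xk[x,z]\oplus yk[y,z]\oplus\{(r(z)p(z))'\mid r\in k[z]\}$ of the relevant space of Hamiltonians (Proposition \ref{LieDpEqualities}). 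For $\deg p=2$ this space contains a linear polynomial $z+a$, and since $\{x^{i}a_i(z),z+a\}=ix^{i}a_i(z)$ and $\{y^{j}b_j(z),z+a\}=-jy^{j}b_j(z)$, the entire summand $xk[x,z]\oplus yk[y,z]$ is a \emph{single} bracket against $z+a$, while $(r(z)p(z))'=\{x,yr(z)\}$ absorbs the $k[z]$-summand. That gives the bound two with no monomial-by-monomial case analysis, and with both arguments visibly inside $E_\omega(\Dp)$ (a constraint your sketch never checks).

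Part (ii) is where the genuine gap lies. You correctly state that the goal is to exhibit an element that is not a single bracket, but you never identify it, and your proposed mechanism for using Hypothesis (J)---a leading-term comparison in the bidegree filtration, with (J) ``precluding resonant cancellations''---is not an argument and does not match what (J) actually delivers: (J) is a statement about rational self-maps of $\mathbb{A}^2$ with constant Jacobian, not about cancellations in a filtration. The paper takes the test element to be $x$ (the field $\theta_x$). If $x=\{f,g\}$ with $f,g\in E_\omega(\Dp)$, then $df\wedge dg=x\omega=\pi^*\omega_{\mathbb{A}^2}$ for the projection $\pi=\mathrm{pr}_{x,z}$, whence the rational map $h_{f,g}=(f,g)\circ\pi^{-1}$ of $\mathbb{A}^2$ has Jacobian $1$; Hypothesis (J) is invoked precisely here to conclude that $h_{f,g}$ is a biregular automorphism of $\mathbb{A}^2$. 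This lifts to an automorphism $\varphi$ of $\Dp$ with $\varphi^*g=z$, and since $\varphi^*\omega=\lambda\omega$ with $\lambda\in k^*$ (because $\mathcal{O}(\Dp)^*=k^*$), the subspace $E_\omega(\Dp)$ is stable under $\varphi^*$ up to scalar, forcing $z\in E_\omega(\Dp)$. But for $\deg p\geq 3$ every element of $E_\omega(\Dp)\cap k[z]$ has the form $(r(z)p(z))'$, which is never equal to $z$; contradiction. Without this automorphism-lifting step your sketch has no way to bring (J) to bear, so part (ii) remains unproved as written.
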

In the case  $\deg p = 2$, besides the estimate of Theorem B(i), we prove that the bracket width of $\lielnd{\Dp}$ is equal to one if and only if every algebraic $2$-form on $\Dp$ satisfies 
a kind of `bi-exactness' property, see Lemma \ref{biexact} for the precise statement. 

\medskip

\noindent{\it{Notation and conventions.}} In what follows $k$ denotes an algebraically closed field of characteristic zero. By a variety, we mean an irreducible and reduced $k$-scheme of finite type. 
For a smooth variety $X$, we denote by $\mathcal{T}_X$ the tangent sheaf of $X$, that is, the dual of the coherent locally free sheaf $\Omega_{X/k}$ of K\"ahler  differentials on $X$. We denote by 
$\Ve(X)$  the Lie algebra $H^0(X,\mathcal{T}_X)$ of global vector algebraic fields $X$, endowed with its Lie bracket $[\cdot, \cdot]$.

For an affine variety $X$, we frequently identify elements of $\Ve(X)$ with $k$-derivations of the algebra $\mathcal O(X)=H^0(X,\mathcal{O}_X)$ of regular functions on $X$ via the canonical $k$-linear isomorphism $\Ve(X) \to 
\mathrm{Der}(\O(X),\O(X))$
which associates to a global vector field $\xi$ on $X$ the derivation $\mathcal{L}_\xi$ obtained as the composition of the canonical $k$-derivation   $d\colon\O(X)\to \Omega_{\O(X)/k}$ 
with the interior contraction $i_\xi\colon\Omega_{\O(X)/k}\to \O(X)$.

Throughout below by `width' of a Lie algebra we mean `bracket width',  unless specified otherwise.

\section{Some basic examples of  Lie algebras of width 1}

It is well known\footnote{See, e.g., the proof of Proposition~1 in \cite[Section~2]{Rud}} that every element $\mu = f_1 \d_{x_{i_1}} + \dots + f_k \d_{x_{i_k}}$ of the simple Lie algebra $\Ve(\mathbb{A}^n)=\mathrm{Der}_kk[x_1,\dots x_n]$ can be written as the single bracket \[\mu= [\d_{x_1}, g_1 \d_{x_{i_1}} + \dots + g_k \d_{x_{i_k}}],   \]
 where $g_i$ is a polynomial such that $\frac{\d g_i}{\d x_1} = f_i$ for every $i = 1, \dots, k$.  More generally, we have the following result. 
  
\begin{proposition}\label{A1timesX}
Let $X$ be a smooth affine variety. Then for every $n \geq 1$, $\Ve(\mathbb{A}^n \times X)$
is a simple Lie algebra of width one.
\end{proposition}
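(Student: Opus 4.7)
The plan is to deduce simplicity from the result of Jordan/Siebert cited in the introduction, since $\mathbb{A}^n\times X$ is again smooth and affine, and then to mimic and extend the footnoted computation for $\Ve(\mathbb{A}^n)$ by singling out the vector field $\partial_{t_1}$, where $t_1,\dots,t_n$ denote the coordinates on $\mathbb{A}^n$, and exhibiting every $\mu\in\Ve(\mathbb{A}^n\times X)$ as $[\partial_{t_1},\nu]$ for a suitable $\nu$ constructed by formal integration with respect to $t_1$.

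The preparatory step is to decompose an arbitrary $\mu\in\Ve(\mathbb{A}^n\times X)$ in a way that separates its ``vertical'' and ``horizontal'' parts. Setting $f_i=\mathcal L_\mu(t_i)\in\O(\mathbb{A}^n\times X)=\O(X)[t_1,\dots,t_n]$, I can write $\mu=\sum_{i=1}^n f_i\,\partial_{t_i}+\eta$, where $\eta$ is the unique derivation of $\O(\mathbb{A}^n\times X)$ that agrees with $\mu$ on $\O(X)$ and kills every $t_i$. Expanding $\eta(h)\in\O(X)[t_1,\dots,t_n]$ monomial-by-monomial in the $t_j$'s for each $h\in\O(X)$ and using the Leibniz rule, one obtains a unique expansion $\eta=\sum_{\alpha}t^\alpha\tilde\eta_\alpha$, where $\tilde\eta_\alpha\in\Ve(X)$ has been extended to $\Ve(\mathbb{A}^n\times X)$ so as to kill all the $t_i$'s. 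Equivalently, this decomposition reflects the splitting $\mathcal T_{\mathbb{A}^n\times X}=\pi_1^*\mathcal T_{\mathbb{A}^n}\oplus\pi_2^*\mathcal T_X$ of the tangent sheaf of a product.

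With this in hand, the construction of $\nu$ is routine. For the vertical part, I choose $g_i\in\O(\mathbb{A}^n\times X)$ with $\partial g_i/\partial t_1=f_i$ (e.g.\ by formal integration in $t_1$), so that $[\partial_{t_1},\sum g_i\partial_{t_i}]=\sum f_i\partial_{t_i}$. For the horizontal part, the crucial observation is that $[\partial_{t_1},\tilde\eta_\alpha]=0$ because $\tilde\eta_\alpha$ acts trivially on the $t_i$'s; hence $[\partial_{t_1},t^\alpha\tilde\eta_\alpha]=\partial_{t_1}(t^\alpha)\tilde\eta_\alpha$, which reduces the problem to integrating the scalar coefficients in $t_1$. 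Setting $\zeta=\sum_\alpha\frac{t_1^{\alpha_1+1}}{\alpha_1+1}\,t_2^{\alpha_2}\cdots t_n^{\alpha_n}\,\tilde\eta_\alpha$ yields $[\partial_{t_1},\zeta]=\eta$, and then $\nu=\sum g_i\partial_{t_i}+\zeta$ satisfies $\mu=[\partial_{t_1},\nu]$. The only step that is not a direct transcription of the $\mathbb{A}^n$ case is the decomposition of the horizontal derivation $\eta$ as $\sum t^\alpha\tilde\eta_\alpha$ with $\tilde\eta_\alpha\in\Ve(X)$; I expect this to be the main (and only mild) technical point, but it is immediate from the product structure of $\O(\mathbb{A}^n\times X)$ together with the Leibniz rule and requires no hypothesis on $X$ beyond smoothness.
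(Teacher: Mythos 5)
Your proposal is correct and follows essentially the same strategy as the paper: both exhibit every vector field as a single bracket with the coordinate vector field of the first affine factor, after splitting it into a ``vertical'' part (an $\O(\mathbb{A}^n\times X)$-combination of the $\partial_{t_i}$) and a ``horizontal'' part (a polynomial combination of vector fields pulled back from $X$), and then integrating coefficients with respect to $t_1$. The only difference is organizational --- the paper reduces to $n=1$ by induction on $n$ whereas you handle general $n$ directly with multi-indices --- and both arguments rely on the cited Jordan/Siebert result for simplicity.
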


\begin{proof}
By induction on $n$, it suffices to prove the assertion in the case $n=1$. Let $\mathbb{A}^1=\mathrm{Spec}(k[x])$, let $\nu \in \Ve(X)$, and let $k \ge 0$. Then $x^m\nu \in \Ve(\mathbb{A}^1 \times X)$ can be represented as a single bracket: 
\begin{equation} [\d_x,\frac{1}{m+1}x^{m+1}\nu] =  \frac{1}{m+1}( \d_x(x^{m+1})\nu
+ x^{k+1}  [\d_x,\nu]) = x^k\nu.  
\end{equation}

The same conclusion holds for any $x^mf\d_x$, where $f \in \mathcal{O}(X)$ and $k \ge  0$: 
\begin{equation} [\d_x,\frac{1}{m+1}x^{m+1}f\d_x] =  \frac{1}{m+1}( \d_x(x^{m+1})f\d_x
+ x^{m+1}  [\d_x,f\d_x]) = x^mf\d_x.  
\end{equation}
Since every $\mu \in \Ve(\mathbb{A}^1 \times X)$  can be  represented as a sum of elements of the form $x^m\nu$, $\nu \in \Ve(X)$, $m\ge 0$, and elements of the form  $x^mf\d_x$, $f \in \mathcal{O}(X)$, $m\ge 0$, we conclude that $\mu = [\d_x,\delta]$ for a suitable $\delta \in \Ve(\mathbb{A}^1 \times X)$.
\end{proof}

The same conclusion holds for the Witt algebra $$W_n=\Ve (\mathbb T^n)=\Der k[x_1,x_1^{-1},\dots ,x_n,x_n^{-1}]$$  of vector fields on the $n$-dimensional torus $\mathbb{T}^n=\mathbb{G}_m^n$: each element of $W_n$ can be represented as a single bracket. Actually, we have the following stronger result: 

\begin{proposition}\label{A*times}
Let $X$ be a smooth affine variety. Then for every $n\geq 1$, $\Ve(\mathbb T^n \times X)$ is a simple Lie algebra of width one. 

\end{proposition}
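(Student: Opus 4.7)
The plan is to mirror the structure of Proposition \ref{A1timesX}: reduce to $n=1$ by induction, using $\mathbb T^n \times X \cong \mathbb T^1 \times (\mathbb T^{n-1} \times X)$ together with the observation that $\mathbb T^{n-1} \times X$ is again smooth affine. Simplicity of $\Ve(\mathbb T^n \times X)$ follows, as for any smooth affine variety, from the general results of Jordan and Siebert cited in the introduction, so the substantive task is the width bound.

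For the width bound in the case $n=1$, set $\mathbb T^1 = \Spec(k[x, x^{-1}])$ and expand any $\mu \in \Ve(\mathbb T^1 \times X)$ uniquely as
\[
\mu = \sum_{m \in \mathbb{Z}} x^m \nu_m + \sum_{m \in \mathbb{Z}} x^m f_m\, \d_x, \qquad \nu_m \in \Ve(X),\; f_m \in \O(X),
\]
with finitely many nonzero terms. Using that any $\nu \in \Ve(X)$ commutes with $\d_x$ and annihilates $x$, a direct computation yields, for every integer $a$ and every $r \in \mathbb Z$,
\[
[x^a \d_x,\, x^r \nu] = r\, x^{a+r-1} \nu, \qquad [x^a \d_x,\, x^r f\, \d_x] = (r - a)\, x^{a+r-1} f\, \d_x.
\]

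The naive choice $\eta = \d_x$ used in Proposition \ref{A1timesX} breaks down here, because an integration of a term like $x^{-1} \nu_{-1}$ would require $\log x$, which is not a regular function on $\mathbb T^1$. The remedy is to take $\eta = x^a \d_x$ for an integer $a$ chosen large enough that neither $a-1$ nor $2a-1$ lies in the (finite) support of $\mu$; this is always possible. With such an $a$, set
\[
\delta = \sum_{m} \frac{1}{m - a + 1}\, x^{m-a+1}\, \nu_m \;+\; \sum_{m} \frac{1}{m - 2a + 1}\, x^{m-a+1}\, f_m\, \d_x.
\]
All denominators are nonzero on the support of $\mu$, so $\delta \in \Ve(\mathbb T^1 \times X)$, and the bracket formulas above give $[x^a \d_x, \delta] = \mu$. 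Hence $\mu$ is a single bracket, and since $\mu$ was arbitrary the width equals one.

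The only real subtlety I anticipate is the one already highlighted: unlike on $\mathbb A^1$, the operator $\d_x$ is not surjective on $k[x, x^{-1}]$ (the $x^{-1}$-coefficient is obstructed), so one cannot fix a single universal $\eta$ that works for all $\mu$. Allowing $\eta = x^a \d_x$ to depend, through $a$, on the support of $\mu$ is what repairs the argument, and verifying that this single choice simultaneously handles the $\Ve(X)$-valued coefficients and the $\O(X)\d_x$-valued ones (which give two distinct ``forbidden'' values $a-1$ and $2a-1$) is the heart of the proof.
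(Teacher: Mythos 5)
Your proposal is correct and follows essentially the same route as the paper: reduce to $n=1$, decompose $\mu$ into terms $x^m\nu_m$ and $x^mf_m\partial_x$, and bracket against a single $x^a\partial_x$ with $a$ chosen so that $a-1$ and $2a-1$ avoid the finite support of $\mu$, which is exactly the paper's condition $l-1\notin S$, $2l-1\notin S$. Your explicit closed-form $\delta$ and the verified bracket identities match the paper's formulas \eqref{torustimes} and \eqref{torustimes2}, so there is nothing to add.
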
 

\begin{proof}
Again, it suffices to establish the assertion in the case 
 where $n=1$. Set $\mathbb{T}^1=\mathrm{Spec}(k[x,x^{-1}])$. For every $m\in \mathbb{Z}$ and $f \in \mathcal{O}(X)$, we have  \begin{equation}\label{torustimes}
x^mf\d_x = [x^l \d_x,  \frac{1}{m-2l+1}x^{m-l+1}f\d_x],
\end{equation}
 where $m-2l+1 \neq 0$, $l \in \mathbb{N}$.
 Furthermore, observe that any 
  $x^m\nu \in \Ve(\mathbb{T}^1 \times X)$, where $\nu \in \Ve(X)$ and $m \in \mathbb{Z}$, can also be represented as
a single bracket:  
\begin{equation}\label{torustimes2} [x^l\d_x,\frac{1}{m-l+1}x^{m-l+1}\nu] =  \frac{1}{m-l+1}( x^l\d_x(x^{m-l+1})\nu
+ x^{m-l+1}  [x^l\d_x,\nu]) = x^{m}\nu,
\end{equation}
where $l \in  \mathbb{N}$, $m\in \mathbb{Z}$, and
$m-l+1 \neq 0$.

Let $\d \in \Ve(\mathbb{T}^1 \times X)$.  It can be represented as a finite sum of  
elements of the form $x^m\nu$, where $\nu \in \Ve(X)$, $m\in \mathbb{Z}$, and elements of the form  $x^mf\d_x$, where $f \in \mathcal{O}(X)$ and $m\in \mathbb{Z}$. Define $S\subset \mathbb N$ as the set of all exponents $m$ contained in all summands of $\d$. Now, if $l-1\notin S$ and $2l-1\notin S$, then 
 by \eqref{torustimes} and \eqref{torustimes2} we can represent $\d$ as a bracket $
 [x^l\d_x,\delta]$
for a suitable $\delta \in \Ve(\mathbb{T}^1 \times X)$. Since we have infinitely many  choices for $l \in \mathbb{N}$, we conclude that every $\d \in \Ve(\mathbb{T}^1 \times X)$ can be represented as a single bracket. 
\end{proof}

\begin{corollary} \label{cor1}
Let $G$ be a nontrivial connected linear algebraic $k$-group. Suppose that $G$ is not semisimple. Then $\Ve (G)$ is a simple Lie algebra of width one. 
\end{corollary}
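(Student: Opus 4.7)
The plan is to reduce Corollary \ref{cor1} to Propositions \ref{A1timesX} and \ref{A*times} by exhibiting $G$, as an algebraic variety, as a direct product with one factor either $\mathbb{A}^1$ or $\mathbb{T}^1$. Being non-semisimple, $G$ either has a nontrivial unipotent radical or is reductive with a nontrivial central torus, and I would treat these two cases separately.

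In the first case, suppose $R_u(G) \neq 1$. Mostow's Levi decomposition in characteristic zero yields $G \simeq L \ltimes R_u(G)$ with $L \subset G$ reductive, so as a variety $G \cong L \times R_u(G)$. Every connected unipotent algebraic group in characteristic zero is variety-isomorphic to $\mathbb{A}^n$ for some $n = \dim R_u(G) \geq 1$, so $G \cong \mathbb{A}^1 \times (L \times \mathbb{A}^{n-1})$ and Proposition \ref{A1timesX} applies directly.

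In the second case, $G$ is reductive with $G^{ab} := G/[G,G]$ a nontrivial torus. Fix a maximal torus $T \subset G$; it contains $Z(G)^\circ$, and the restriction of $G \twoheadrightarrow G^{ab}$ to $T$ is a surjection of tori with kernel $T \cap [G,G]$. A standard fact from the structure theory of reductive groups says that $T \cap [G,G]$ is a maximal torus of the semisimple group $[G,G]$, hence a connected torus. Dualising the short exact sequence of tori $1 \to T \cap [G,G] \to T \to G^{ab} \to 1$ yields a short exact sequence of free abelian groups
\[
0 \to X(G) \to X(T) \to X(T \cap [G,G]) \to 0,
\]
which splits because the rightmost group is free. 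Hence $X(G)$ is a direct summand of $X(T)$, and one can choose a primitive character $\chi \in X(G)$ together with a cocharacter $\lambda \in X_{*}(T)$ satisfying $\chi \circ \lambda = \mathrm{id}_{\mathbb{G}_m}$. Writing $H = \ker \chi$, a closed smooth affine subgroup of $G$, the map $\mathbb{G}_m \times H \to G$, $(t,h) \mapsto \lambda(t) h$, is an isomorphism of varieties with inverse $g \mapsto (\chi(g), \lambda(\chi(g))^{-1} g)$. Therefore $G \cong \mathbb{T}^1 \times H$ as varieties and Proposition \ref{A*times} concludes.

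The step I expect to be the main obstacle is the reductive case: one cannot in general split the isogeny $Z(G)^\circ \times [G,G] \to G$ as a product of algebraic groups, so the challenge is to produce a \emph{variety}-theoretic product decomposition with a $\mathbb{G}_m$-factor. The decisive observation is that the character lattice $X(G)$ sits as a direct summand of $X(T)$, a consequence of the kernel $T \cap [G,G]$ being a connected torus; this primitivity is precisely what yields the character--cocharacter section used to build the variety-level decomposition.
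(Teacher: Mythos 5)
Your proof is correct. The paper reaches the conclusion through a stronger intermediate statement, its Lemma \ref{lem:dec}: a full product decomposition $G\cong G^{\textrm{u}}\times G^{\textrm{ss}}\times G^{\textrm{tor}}$ of $k$-varieties, obtained by trivializing two torsors --- the quotient $G\to G^{\textrm{tor}}$, split by restricting to a maximal torus and using that the category of $k$-tori is semisimple, and the quotient $G^{\textrm{ssu}}\to G^{\textrm{ss}}$ under the unipotent radical, split by Mostow's theorem. Your argument uses exactly the same two classical inputs but extracts only what is needed, one factor at a time: in the non-reductive case you apply the Levi decomposition to $G$ itself to peel off an $\mathbb{A}^n$-factor and invoke Proposition \ref{A1timesX}, and in the reductive non-semisimple case you peel off a single $\mathbb{G}_m$-factor by splitting a primitive character with a cocharacter and invoke Proposition \ref{A*times}. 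Your character/cocharacter construction is the ``rank one'' shadow of the paper's splitting $s\colon G^{\textrm{tor}}\to T$: both rest on the freeness of character lattices, and the connectedness of $T\cap[G,G]$ (equivalently, the saturatedness of $X(G)$ in $X(T)$), which you rightly single out, is exactly what makes the lattice sequence split. What the paper's route buys is the cleaner standalone decomposition lemma, noted there to be of independent interest; what yours buys is economy, since the three-factor decomposition is never needed. One small point worth making explicit: for Proposition \ref{A*times} to apply under the paper's conventions, $H=\ker\chi$ must be irreducible; this is automatic either because $\mathbb{G}_m\times H\cong G$ is connected, or because the group-theoretic section $\lambda$ forces $H=H^{\circ}$.
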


\begin{proof}
For any connected linear algebraic $k$-group $G$, let $G^{\textrm{u}}$ denote its unipotent radical, then the quotient $G^{\textrm{red}}=G/G^{\textrm{u}}$ is reductive. The derived subgroup $G^{\textrm{ss}}=[G^{\textrm{red}},G^{\textrm{red}}]$ is semisimple, and the quotient $G^{\textrm{tor}}=G^{\textrm{red}}/G^{\textrm{ss}}$ is a torus, actually, the maximal toric quotient of $G$. The proof of the corollary is based on the following lemma which is of some interest in its own right. 

\begin{lemma} \label{lem:dec}
With the notation as above, we have an isomorphism of $k$-varieties
\begin{equation} \label{eq:lemma} 
G\cong G^{\textrm{u}}\times G^{\textrm{ss}} \times G^{\textrm{tor}}.     
\end{equation}
\end{lemma}

\noindent{\it {Proof of the lemma.}} We use a variation of the argument of \name{Popov} in the proof of Theorem 1  of \cite{Po2}.  
Denote by $G^{\textrm{ssu}}$ the kernel 
of the composed homomorphism $G\to G^{\textrm{red}}\to G^{\textrm{tor}}$. The group $G^{\textrm{ssu}}$, which is an extension of $G^{\textrm{ss}}$ by $G^{\textrm{u}}$, is generated by all unipotent subgroups of $G$ and is equal to the intersection of the kernels of all characters of $G$. 

Let us view the quotient morphism 
\begin{equation} \label{torsor} 
G\to G/G^{\textrm{ssu}}=G^{\textrm{tor}}
\end{equation}
as a $G^{\textrm{tor}}$-torsor $G\to G^{\textrm{tor}}$ under 
$G^{\textrm{ssu}}$ and show that it is isomorphic to the trivial torsor $(G/G^{\textrm{ssu}})\times G^{\textrm{ssu}}\to G^{\textrm{tor}}.$ To this end, choose a maximal torus $T\subset G$ and restrict the morphism \eqref{torsor} to $T$. 
We obtain a surjective morphism $T \to G^{\textrm{tor}}$, which admits a splitting $s\colon G^{\textrm{tor}} \to T$,
because the category of $k$-tori, being dual to the category of finitely generated free abelian groups, is semisimple.
The morphism $s$ induces a section of the morphism \eqref{torsor}, which splits the torsor $G\to  G^{\textrm{tor}}$. We thus obtain an isomorphism of $k$-varieties 
\begin{equation} \label{split1}
G\cong G^{\textrm{ssu}} \times G^{\textrm{tor}}. 
\end{equation}

In a similar manner, consider the quotient morphism $G^{\textrm{ssu}}\to G^{\textrm{ss}}$ and view it as a
$G^{\textrm{ss}}$-torsor under $G^{\textrm{u}}$.
As the group $G^{\textrm{u}}$ is unipotent, this torsor splits, see, e.g., a modern proof of Mostow's theorem 
on the Levi decomposition given by \name{Conrad} in  
\cite[proof~of~Proposition~5.4.1]{Con}.   We thus
obtain an isomorphism of $k$-varieties
\begin{equation} \label{split2}
G^{\textrm{ssu}} \cong G^{\textrm{u}} \times G^{\textrm{ss}}.
\end{equation}
Combining \eqref{split1} and \eqref{split2}, we obtain \eqref{eq:lemma}, which proves the lemma. \qed

We can now prove Corollary \ref{cor1}. Since by assumption the group $G$ is not semisimple, at least one of the groups $G^{\textrm{u}}$ and $G^{\textrm{tor}}$ is not trivial. By Lemma \ref{lem:dec}, the $k$-variety $G$ contains a direct factor of the form $\mathbb G_{\textrm{a}}^n$ or $\mathbb G_{\textrm{m}}^n$ with $n\ge 1$. The assertion of the corollary then follows from Propositions \ref{A1timesX} and  \ref{A*times}.

\end{proof}

We now consider a class of Lie algebras of vector fields on smooth affine varieties $X$ of dimension $n$ endowed with an algebraic volume form, that is, a nowhere vanishing global section $\omega$ of $\Lambda^n\Omega_{X/k}$. Note that $\omega$ is a closed algebraic differential form and that $H^0(X,\Lambda^n\Omega_{X/k})=\O(X)\omega$. Recall that the \emph{$\omega$-divergence} of an element $\mu$ of $\Ve(X)$ is the unique element $\Div_\omega \mu \in \mathcal{O}(X)$ such that $\mathcal{L}_\mu \omega=(\Div_\omega \mu)\omega$, where $\mathcal{L}_\mu \omega$ is the Lie derivative of $\omega $ with respect to $\mu$, which, in this case, is equal to the composition 
of the interior contraction
$i_\mu:\Lambda^n\Omega_{X/k}\to \Lambda^{n-1}\Omega_{X/k}$ with the exterior derivative $d\colon\Lambda^{n-1}\Omega_{X/k}\to \Lambda^{n}\Omega_{X/k}$.  An element $\mu \in \Ve(X)$ such that $\Div_\omega (\mu)=0$ is said to be of trivial $\omega$-divergence. The identity 
  \begin{equation}\label{Div}
\Div_\omega[\mu,\nu]=d(\Div_\omega \nu) \wedge i_\mu\omega- d(\Div_\omega \mu)\wedge i_\nu \omega=\mu (\Div_\omega \nu)-\nu(\Div_\omega \mu)
\end{equation}
which holds for all  $\mu,\nu \in \Ve(X)$ (see, e.g., \cite[Lemma~1]{Sh81}) implies in particular that the $k$-vector subspace $\mathrm{VP}_\omega(X)$ of $\Ve(X)$ generated by vector fields of trivial $\omega$-divergence is a Lie subalgebra of $\Ve(X)$. 

\begin{example}
For $X=\mathbb{A}^n$, $\omega_{\mathbb{A}^n}=dx_1\wedge \dots dx_n$ and an element $\mu = f_1\d_{x_1} + \dots+ f_n\d_{x_n}$ of  $\Ve(\mathbb{A}^n)$, we have the classical formula $$\Div_{\omega_{\mathbb{A}^n}} \mu = \frac{\d f_1}{\d x_1} + \dots + \frac{\d f_n}{\d x_n}.$$
By \cite[Lemma 3]{Sh81}, the Lie algebra $\mathrm{VP}_{\omega_{\mathbb{A}^n}}(\mathbb{A}^n)$ is simple. Note that according to \cite{Sh81}, this algebra coincides with the  Lie algebra of the so-called ind-group 

\[\SAut(\mathbb{A}^n) = \{ f = (f_1,\dots,f_n) \in \Aut(\mathbb{A}^n) \mid (\det  \Jac)(f) =\det \left[\frac{\d f_i}{\d x_j}\right]_{ij}= 1 \}.
  \] 
\end{example}

Let $X$ and $Y$ be smooth affine varieties with algebraic volumes $\omega_X$ and $\omega_Y$ respectively. Then $p_1^*\omega_{X}\wedge p_2^*{\omega_Y}$ is an algebraic volume form on $X\times Y$, which we denote for brevity by $\omega_{X}\wedge \omega_{Y}$. 
We have the following analogue of Proposition \ref{A1timesX} which implies in particular that for every $n\geq 1$ the width of the Lie algebra  $\mathrm{VP}_{\omega_{\mathbb{A}^n}}(\mathbb{A}^n)$ is equal to one.

\begin{proposition}\label{A1+affinespace}\label{affinespace}
Let $X$ be a smooth affine variety with algebraic volume form $\omega$. Assume that the $k$-linear map $\Div_\omega\colon\Ve(X)\to \mathcal{O}(X)$ is surjective.  Then for every $n\geq 1$  the  Lie algebra $\mathrm{VP}_{\omega_{\mathbb{A}^n}\wedge\omega}(\mathbb{A}^n\times X)$ has width equal to one. 
\end{proposition}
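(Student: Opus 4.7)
The plan is to reduce to the case $n=1$ by induction on $n$. For the inductive step, once the base case is proven one would apply it to the pair $(X',\omega')=(\mathbb{A}^{n-1}\times X,\,\omega_{\mathbb{A}^{n-1}}\wedge\omega)$; the surjectivity of $\Div_{\omega'}\colon\Ve(X')\to\mathcal{O}(X')$ is inherited from that of $\Div_\omega$, since any $f=\sum_\alpha x^\alpha f_\alpha\in k[x_1,\ldots,x_{n-1}]\otimes_k\mathcal{O}(X)$ can be written as $\Div_{\omega'}(\sum_\alpha x^\alpha \nu_\alpha)$ once each $\nu_\alpha\in\Ve(X)$ is chosen with $\Div_\omega\nu_\alpha=f_\alpha$.

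For the base case $n=1$, set $\mathbb{A}^1=\Spec k[x]$ and use the canonical decomposition $\Ve(\mathbb{A}^1\times X)=\mathcal{O}(X)[x]\,\partial_x\oplus\bigl(k[x]\otimes_k\Ve(X)\bigr)$ to write any $\mu$ uniquely as
\[
\mu=h\,\partial_x+\sum_{i\geq 0}x^i\nu_i,\qquad h=\sum_{j\geq 0}h_j x^j,
\]
with $h_j\in\mathcal{O}(X)$ and $\nu_i\in\Ve(X)$ (finitely many nonzero terms). A direct computation exploiting the fact that $\omega$ is pulled back from $X$ and is of top degree there yields the identities $\Div_{dx\wedge\omega}(g\,\partial_x)=\partial_x g$ and $\Div_{dx\wedge\omega}(x^i\nu_i)=x^i\Div_\omega\nu_i$. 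Hence the condition $\mu\in\mathrm{VP}_{dx\wedge\omega}(\mathbb{A}^1\times X)$ is equivalent to the relations $(j+1)h_{j+1}+\Div_\omega\nu_j=0$ for every $j\geq 0$.

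To exhibit such a $\mu$ as a single bracket, I would adapt the argument of Proposition~\ref{A1timesX} by taking a formal $x$-antiderivative: set $H=\sum_{j\geq 0}\tfrac{h_j}{j+1}x^{j+1}$ and define
\[
\delta=H\,\partial_x+\eta_0+\sum_{i\geq 1}\tfrac{1}{i}x^i\nu_{i-1},
\]
where $\eta_0\in\Ve(X)$ is chosen so that $\Div_\omega\eta_0=-h_0$; such a choice is available precisely by the surjectivity hypothesis on $\Div_\omega$. A straightforward Leibniz-rule calculation then gives $[\partial_x,\delta]=\mu$, and combining the zero-divergence relations on the $\nu_j$ with the identity $\partial_x H = h$ one finds
\[
\Div_{dx\wedge\omega}(\delta)=h_0+\Div_\omega\eta_0=0.
\]
Since $\partial_x$ itself is of trivial $dx\wedge\omega$-divergence, this expresses $\mu$ as a single bracket of elements of $\mathrm{VP}_{dx\wedge\omega}(\mathbb{A}^1\times X)$, proving that the width is at most, and hence equal to, one.

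The construction is essentially forced by the equation $[\partial_x,\delta]=\mu$; the only genuine obstacle is the \emph{constant of integration} $\eta_0$, which must be a vector field on $X$ with a prescribed nonzero divergence $-h_0$. This is the single point where the surjectivity of $\Div_\omega$ enters: without it, the candidate $\delta$ would retain the residual divergence $h_0\in\mathcal{O}(X)$, and $\mu$ would only be realized as a bracket inside the larger ambient algebra $\Ve(\mathbb{A}^1\times X)$ rather than inside $\mathrm{VP}_{dx\wedge\omega}(\mathbb{A}^1\times X)$ itself.
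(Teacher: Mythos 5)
Your proof is correct and follows essentially the same route as the paper's: reduce to $n=1$ via the same observation that surjectivity of $\Div_\omega$ propagates to $\mathbb{A}^1\times X$, realize $\mu$ as $[\partial_x,\delta]$ by antidifferentiation in $x$ (the content of Proposition~\ref{A1timesX}), and absorb the residual, $x$-independent divergence of $\delta$ by adding a pullback of a vector field on $X$ with prescribed divergence. The only cosmetic difference is that you carry out the divergence computation explicitly in the coordinate decomposition, whereas the paper deduces $\partial_x(\Div_{dx\wedge\omega}\delta)=0$ abstractly from the identity \eqref{Div}.
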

\begin{proof}
 Set $\mathbb{A}^1=\mathrm{Spec}(k[x])$ so that $\omega_{\mathbb{A}^1}=dx$. We first observe that for every smooth affine variety $Z$ with algebraic volume form $\omega_Z$ such that $\Div_{\omega_Z}\colon\Ve(Z)\to \mathcal{O}(Z)$ is surjective, the $k$-linear map $\Div_{dx\wedge \omega_Z}\colon\Ve(\mathbb{A}^1\times Z)\to \mathcal{O}(Z)[x]$ is surjective. 
 Indeed, every element of $\O(Z)[x]$ is a linear combination of elements $x^mf$ where $m\geq 0$ and $f\in \O(Z)$, and the hypothesis implies that $x^mf=\Div_{dx\wedge \omega_Z}(x^m\nu)$ where $\nu$ is any element of $\Ve(Z)$ such that $\Div_{\omega_Z}(\nu)=f$. 
By induction, using the previous observation, we are thus reduced to prove the assertion for $\mathbb{A}^1\times X$. Let $\mu$ be any element of $\mathrm{VP}_{dx\wedge \omega}(\mathbb{A}^1\times X)$. By the proof of Proposition \ref{A1timesX}, there exists $\delta \in \Ve(\mathbb{A}^1\times X)$ such that $\mu=[\partial_x,\delta]$. By \eqref{Div},  we have $$0 =  \Div_{dx\wedge \omega} \mu=\partial_x (\Div_{dx\wedge \omega}\delta)-\delta(\Div_{dx\wedge \omega}\partial_x).$$
Since $d\omega=0$, it follows that $\Div_{dx\wedge \omega}\partial_x=0$. This implies in turn that $\partial_x (\Div_{dx\wedge \omega}\delta)=0$ and hence, that $\Div_{dx\wedge \omega}\delta\in \mathcal{O}(Z)\subset \mathcal{O}(Z)[x]$. By assumption there exists $\nu\in \Ve(Z)$ such that $\Div_\omega(\nu)=-\Div_{dx\wedge \omega}\delta$. Then $\eta=\delta+\mathrm{p}_2^*\nu$ is an element of $\mathrm{VP}_{dx\wedge \omega}(\mathbb{A}^1\times X)$ such that $\mu=[\partial_x,\eta]$. Thus, 
$\mathrm{VP}_{dx\wedge \omega}(\mathbb{A}^1\times Z)=[\mathrm{VP}_{dx\wedge \omega}(\mathbb{A}^1\times Z),\mathrm{VP}_{dx\wedge \omega}(\mathbb{A}^1\times Z)]$ 
and it has bracket width equal to one. 
\end{proof}
\begin{remark} For a smooth affine variety $X$ of dimension $n$ with algebraic volume form $\omega$, the surjectivity of the map $\Div_\omega\colon\Ve(X)\to \mathcal{O}(X)$ implies in particular that every algebraic $n$-form on $X$ is exact, as follows from the definition  $d(i_\xi\omega)=(\Div_\omega \xi) \omega$. 
\end{remark}

\section{Vector fields on smooth affine curves} \label{sec:curves} 

In this section, we consider Lie algebras of algebraic vector fields on certain smooth affine curves. We begin with the case of rational curves. Recall that every such curve $C$ is isomorphic to a principal Zariski open subset of the affine line $\mathbb{A}^1=\mathrm{Spec}(k[x])$. 

\begin{proposition}
The width of the simple Lie algebra $\Ve (C)$ of a smooth affine rational curve $C$ is at most two. 
\end{proposition}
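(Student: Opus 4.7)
Since $k$ is algebraically closed, a smooth affine rational curve $C$ has the form $C\cong \mathbb{A}^1\setminus\{a_1,\dots,a_n\}$ for some $n\ge 0$ and pairwise distinct $a_i\in k$. The cases $n=0$ and $n=1$ are already settled (with width equal to one) by Propositions~\ref{A1timesX} and~\ref{A*times}, so I would reduce immediately to $n\ge 2$. Write $A=\mathcal{O}(C)=k[x,1/\prod_i(x-a_i)]$ and identify $\mathrm{Vec}(C)$ with $A\partial_x$, whose bracket is the Wronskian $[f\partial_x,h\partial_x]=(fh'-f'h)\partial_x$.

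The plan is to split a given $h\partial_x$ into a part that is obviously a single bracket with $\partial_x$ and a ``residual'' part that can be absorbed into a second bracket. Using the partial fractions decomposition in $A$, one decomposes $h=h_0+r$ where $r=\sum_i c_i/(x-a_i)$ and $h_0$ consists of polynomial and higher-order pole terms; every such term admits an antiderivative in $A$, so $h_0=H'$ for some $H\in A$ and hence $h_0\partial_x=[\partial_x,H\partial_x]$. The task is therefore reduced to showing that the simple-pole part $r\partial_x$ is itself a bracket modulo an exact correction.

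For this I would try the ansatz $f=-\tfrac{x}{2}$ and $g_r=\sum_i c_i/(x-a_i)\in A$. A direct computation, using $x/(x-a_i)^2=1/(x-a_i)+a_i/(x-a_i)^2$, shows
\[
[-\tfrac{x}{2}\partial_x,\,g_r\partial_x]=r\,\partial_x+h_1\,\partial_x,\qquad h_1=\tfrac{1}{2}\sum_i\frac{c_ia_i}{(x-a_i)^2}.
\]
The correction term $h_1$ has vanishing residues at every $a_i$, so $h_1=H_1'$ with $H_1=-\tfrac{1}{2}\sum_i c_ia_i/(x-a_i)\in A$. Combining everything yields the desired two-bracket expression
\[
h\,\partial_x=\bigl[\partial_x,(H-H_1)\partial_x\bigr]+\bigl[-\tfrac{x}{2}\partial_x,\,g_r\partial_x\bigr].
\]
The main conceptual point was picking an ansatz for the second bracket that hits the prescribed residue at each puncture $a_i$ simultaneously while producing only a double-pole remainder that is automatically in the image of $d/dx\colon A\to A$; once this is observed, the rest is routine bookkeeping.
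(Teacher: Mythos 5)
Your proof is correct and follows essentially the same route as the paper: isolate the simple-pole part of $h$ (the only obstruction to being in the image of $\mathrm{ad}(\partial_x)$), kill it with a bracket against a scalar multiple of $x\partial_x$, and absorb the resulting double-pole correction, which is exact, into the first bracket. The only cosmetic difference is your normalization $-\tfrac{x}{2}\partial_x$ in place of the paper's $x\partial_x$, which makes the residue bookkeeping slightly cleaner.
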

\begin{proof}
If $C=\mathbb{A}^1$, then the assertion follows from Proposition \ref{A1timesX}. We now assume that $C=\mathbb{A}^1\setminus\{p_1,\ldots, p_n\}$ for some $n\geq 1$. The $k$-vector space $\mathcal{O}(C)\simeq k[x,(\prod_{i=1}^{n}(x-p_i))^{-1}]$ has a basis
  $\{ x^i, \frac{1}{(x-p_1)^{j_1}}, \dots \frac{1}{(x-p_n)^{j_n}}  \}$, $i\geq 0$, $j_\ell \geq 1$. 
  Note that $[\d_x, x^i\d_x] = i x^{i-1} \d_x$ and that
 \[
 [\d_x, \frac{1}{(x-p_r)^{j_r}}\d_x]
 =  \frac{-j_r}{(x-p_r)^{j_r+1}}\d_x. 
 \]
 Therefore, any element of the form $P\d_x$, where $P$ does not contain elements proportional to one of the elements $\frac{1}{(x-p_i)}$, $i=1,\ldots n$, as a summand can be represented as a bracket of the element $\d_x$ and some suitable element from $\Ve(C)$.
  Since  on the other hand
 \[
 [x\d_x, \frac{1}{x-p_i}\d_x]= - \frac{2}{x-p_i}\d_x - \frac{p_i}{(x-p_i)^2}\d_x,
 \]
 it follows that that every $\mu \in \Ve(C)$ can be written in the form $\mu = [\d_x,\nu] + [x\d_x,\delta]$ for some suitable $\nu, \delta \in \Ve(C)$.
\end{proof}

In contrast with the case of the $1$-dimensional torus $\mathbb{T}^1\simeq \mathbb{A}^1\setminus \{0\}$ for which the vector field $x^{-1}\partial_x$ can be written as the single Lie bracket $[x^{m}\partial_x,-\frac{1}{2m}x^{-m}\partial_x]$ for any $m\neq 0$, we expect that for every  $n\geq 2$, a vector field on $\mathbb{A}^1\setminus \{p_1,\dots, p_n\}$ with single pole at each of the points $p_1,\dots p_n$ cannot be written as the Lie bracket of two elements of $\Ve(\mathbb{A}^1\setminus \{p_1,\dots, p_n\})$. This motivates the following:
 \begin{conjecture}
The width of the Lie algebra $\Ve(\mathbb{A}^1 \setminus \{ p_1,\dots,p_n\})$, $n\geq 2$, equals two. 
 \end{conjecture}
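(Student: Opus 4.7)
Since the preceding proposition already establishes that the bracket width is at most two, proving the conjecture reduces to exhibiting a vector field $\mu \in \Ve(C)$ that cannot be written as a single Lie bracket. Guided by the authors' heuristic just above, I would take $\mu_0 = h\partial_x$ with $h = \sum_{i=1}^{n} \frac{a_i}{x-p_i}$ and all $a_i \neq 0$, in the hope that the $n \geq 2$ nonzero simultaneous residue conditions at the punctures are incompatible with a bracket representation $h = fg' - gf'$.

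The key technical step is the factorization $fg' - gf' = -g^2 (f/g)'$. Setting $u = f/g \in k(x)$, a single-bracket representation of $\mu_0$ is equivalent to finding $g \in \mathcal{O}(C)$ and $u \in k(x)$ such that $u' = -h/g^2$ and $ug \in \mathcal{O}(C)$. Rationality of $u$ is equivalent to the vanishing of the residue of $(h/g^2)\,dx$ at every point of $\mathbb{P}^1$---at each $p_i$, at each zero of $g$ outside $\{p_1, \ldots, p_n\}$, and at infinity---while the condition $f = ug \in \mathcal{O}(C)$ constrains the pole locus of $u$. The plan would be to combine residue computations at each $p_i$ via local Laurent expansions with the extra residue conditions forced by the auxiliary zeros of $g$, and to show that when $n \geq 2$ these constraints cannot all be met simultaneously for a carefully chosen $h$.

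The main obstacle I foresee, and the reason this conjecture looks delicate, is the following naive family of pairs, which seems to realize any $h$ as a single bracket. Let $N$ be large enough that $2N$ exceeds the pole order of $h$ at each $p_i$, and set $g := \prod_{i=1}^{n}(x-p_i)^{-N} \in \mathcal{O}(C)$. Then $h/g^2 = h \cdot \prod_i(x-p_i)^{2N}$ is a polynomial, so all its finite residues vanish trivially, and a polynomial multiple of $dx$ has no residue at infinity either; hence $u := -\int (h/g^2)\,dx$ is a polynomial, $f := ug$ has poles only at $\{p_1, \ldots, p_n\}$ so $f \in \mathcal{O}(C)$, and by construction $[f\partial_x, g\partial_x] = -g^2 u'\partial_x = h\partial_x$. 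Resolving this difficulty---either by identifying a subtle constraint that rules out the family above (perhaps by restricting the Laurent expansions of admissible $f, g$ at the $p_i$), or by restricting the conjecture to a proper Lie subalgebra in which such $g$ are not admissible, in the spirit of the divergence-free subalgebras appearing in Section~3---would be the crux of any serious attempt at the proof, and in the absence of such a refinement the bracket width appears to be one rather than two.
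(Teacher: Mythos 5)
The statement you were given is stated in the paper only as a \emph{conjecture}: the authors supply no proof, just the upper bound from the preceding proposition (width at most two) and the heuristic expectation that a vector field with a simple pole at each puncture cannot be a single bracket. So there is no argument of theirs to compare yours against. Your reduction is nevertheless the right one --- since the upper bound is known, everything hinges on exhibiting one element that is not a single bracket --- and your identity $fg'-gf'=-g^{2}(f/g)'$ together with the residue conditions on $(h/g^{2})\,dx$ is the natural framework for attacking that.

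However, the ``naive family'' in your last paragraph is not an obstacle awaiting resolution: it is a correct and complete construction, and it refutes the conjecture as stated. Write $P=\prod_{i=1}^{n}(x-p_i)$ and $h=QP^{-M}$ with $Q\in k[x]$; choose $N$ with $2N\geq M$, set $g=P^{-N}\in\mathcal{O}(C)$, let $u\in k[x]$ be a polynomial antiderivative of $-hP^{2N}\in k[x]$ (which exists since $\operatorname{char}k=0$), and set $f=ug\in\mathcal{O}(C)$. Then $fg'-gf'=-u'g^{2}=hP^{2N}\cdot P^{-2N}=h$, so $h\partial_x=[f\partial_x,g\partial_x]$, and \emph{every} element of $\Ve(C)$ is a single bracket; the width is one. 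In particular the authors' motivating expectation already fails for $n=2$: with $p_1=0$, $p_2=1$ one checks directly that
\[
\Bigl(\frac{1}{x}+\frac{1}{x-1}\Bigr)\partial_x=\Bigl[-\tfrac{1}{2}x(x-1)\,\partial_x,\ \tfrac{1}{x(x-1)}\,\partial_x\Bigr].
\]
So your proposal cannot be completed into a proof of the conjecture; what you have actually produced is a disproof. The only way to rescue a width-two statement along these lines is, as you suggest at the end, to pass to a proper Lie subalgebra in which a factor such as $g=P^{-N}$ is not admissible (e.g.\ one cut out by divergence or residue constraints), but that is a different statement from the one in the paper.
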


 We now consider non-rational smooth affine curves.  Recall that an irreducible smooth affine curve $C$ is said to have \emph{a unique place at infinity} if it is equal to the complement of a single closed point in a smooth projective curve $\bar{C}$. 

\begin{theorem}\label{polefiltration} \label{prop:curve-higher-width} \label{main2} Let $C$ be an irreducible smooth affine curve with a unique place at infinity. Assume that $C$ is not rational and that the tangent sheaf $\mathcal{T}_C$ of $C$ is trivial. Then the width of the simple Lie algebra $\mathrm{Vec}(C)$ is larger than or equal to two. 

More precisely, no nowhere vanishing vector field on $C$ can  be equal to the Lie bracket of two other vector fields on $C$.
\end{theorem}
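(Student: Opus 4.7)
The plan is to exploit the unique valuation $v_\infty$ at the place at infinity $p_\infty\in\bar{C}$ (where $\bar{C}$ denotes the smooth projective completion of $C$) together with the triviality of $\mathcal{T}_C$. Fix a nowhere-vanishing generator $\xi\in\Ve(C)$, so that $\Ve(C)=\mathcal{O}(C)\cdot\xi$ as an $\mathcal{O}(C)$-module, and let $\omega_\xi\in H^0(C,\Omega_{C/k})$ denote its dual $1$-form. Since $C=\bar{C}\setminus\{p_\infty\}$ has a unique boundary point, $\mathcal{O}(C)^*=k^*$, so every nowhere-vanishing element of $\Ve(C)$ has the form $c\xi$ with $c\in k^*$. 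Because $\deg\Omega_{\bar{C}/k}=2g-2$ and $\omega_\xi$ has neither zeros nor poles on $C$, its divisor is supported at $p_\infty$ of degree $2g-2$, giving $v_\infty(\omega_\xi)=2g-2\geq 0$ and dually $v_\infty(\xi)=2-2g$.

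A direct computation gives, for any $f_1,f_2\in\mathcal{O}(C)$,
\[
[f_1\xi,\,f_2\xi]\;=\;F\,\xi,\qquad F\;:=\;f_1\xi(f_2)-f_2\xi(f_1),
\]
so it suffices to show that $F$ is never a nonzero constant. The crux is the following lemma, which I would prove by local analysis at $p_\infty$: the form $\omega_\xi$ is not exact on $C$, equivalently, no $f\in\mathcal{O}(C)$ satisfies $\xi(f)\in k^*$. Indeed, picking a uniformizer $t$ and writing $\xi=h\partial_t$ with $v(h)=2-2g$, the form $\omega_\xi=h^{-1}dt$ is regular at $p_\infty$ (even vanishing to order $2g-2$ when $g\geq 2$). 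Therefore the formal termwise antiderivative of $c\omega_\xi$ is regular at $p_\infty$, so any $f\in\mathcal{O}(C)$ with $df=c\omega_\xi$ would extend to a global regular function on $\bar{C}$ and hence be constant, forcing $c=0$.

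This lemma disposes of the degenerate cases immediately: if one of the $f_i$ lies in $k$, then $F$ is a scalar multiple of $\xi$ applied to the other function, which by the lemma is not a nonzero constant. Assume then that both $f_i$ are non-constant with pole orders $n_i:=-v_\infty(f_i)\geq 1$. Using the bilinearity and skew-symmetry of the bracket, I would first reduce to $n_1\neq n_2$: if $n_1=n_2=n$ with $a_i$ denoting the leading local coefficient of $f_i$ at $p_\infty$, replacing $f_2$ by $\tilde f_2:=f_2-(a_2/a_1)f_1$ leaves the bracket unchanged, and either $\tilde f_2=0$ (whence the bracket vanishes), or $\tilde f_2$ has strictly smaller pole order at $p_\infty$, leading back to the already-handled constant case or to unequal pole orders. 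A leading-order expansion at $p_\infty$ then shows that $f_1\xi(f_2)$ and $f_2\xi(f_1)$ both have $v_\infty$ equal to $1-2g-n_1-n_2$, with leading coefficients proportional to $-n_2$ and $-n_1$ respectively; when $n_1\neq n_2$ these do not cancel, and $F$ acquires a pole at $p_\infty$, in particular $F\notin k^*$.

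The principal obstacle I expect is exactly this potential cancellation of leading terms when $n_1=n_2$, and it is the reason the hypothesis of a unique place at infinity is essential: with a single valuation available, the Gaussian-elimination reduction above is effective, whereas for a curve with several places at infinity one would need to control leading coefficients at each place simultaneously and the argument would require a different formulation.
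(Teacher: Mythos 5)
Your proposal is correct and follows essentially the same route as the paper's proof: write $\Ve(C)=\mathcal{O}(C)\cdot\tau$ using triviality of $\mathcal{T}_C$, reduce to distinct pole orders at the unique place at infinity by subtracting a scalar multiple, and derive a contradiction from the valuation count $v_\infty([f_1\tau,f_2\tau])=2(2-2g)+v_\infty(f_1)+v_\infty(f_2)-1<0$. Your separate non-exactness lemma for the case of a constant coefficient is just a repackaging of the same order computation, which the paper's inequality already covers.
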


\begin{proof}
Let $\bar{C}$ be the smooth projective model of $C$, and let $c_\infty=\bar{C}\setminus C$.  By assumption, the genus $g$ of $\bar{C}$ positive. Note that $\O(C)^*=k^*$. Indeed, otherwise, there would exist a dominant morphism $f\colon C\to \mathbb{A}^1\setminus \{0\}$. The latter would extend to a surjective morphism $\bar{f}\colon\bar{C}\to \mathbb{P}^1$ with the property that $\bar{f}^{-1}([0:1])$ and $\bar{f}^{-1}([1:0])$ belong to $\bar{C}\setminus C$, which is impossible since $C$ has a unique place $c_\infty$ at infinity. 

Since by hypothesis $\mathcal{T}_C$ is the trivial sheaf $\O_C$, we have $\mathrm{Vec}(C)=\O(C)\cdot \tau$ for a certain nowhere vanishing global vector field $\tau\in \mathrm{Vec}(C)$, unique up to multiplication by a nonzero constant. Since $\deg(\mathcal{T}_{\bar{C}})=2-2g$, we have $\mathcal{T}_{\bar{C}}=\mathcal{O}_{\bar{C}}((2-2g)c_\infty)$.  

Now suppose that $\tau$ can be written in the form $[\xi, \nu]$
for some $\xi,\nu \in \mathrm{Vec}(C)$. Write $\xi=f\tau$ and $\nu=g\tau$ for some non-zero regular functions $f$ and $g$ on $C$. Viewing $f$ and $g$ as rational functions on $\bar{C}$, we can assume further without loss of generality that $\mathrm{ord}_{c_\infty}(f)\neq \mathrm{ord}_{c_\infty}(g)$. Indeed, let $R=\mathcal{O}_{\bar{C},c_\infty}$ be the local ring of  $\bar{C}$ at $c_\infty$, and let $t$ be a uniformizing parameter in its maximal ideal. If the equality holds, say $\mathrm{ord}_{c_\infty}(f)=\mathrm{ord}_{c_\infty}(g)=\ell$, then the classes of $f$ and $g$ in $R[t^{-1}]=\mathrm{Frac}(R)$ are equal to $a_ft^\ell$ and $a_gt^\ell$ for some uniquely determined element $a_f$ and $a_g$ in $R\setminus tR$. Let $\overline{a_f}$ and $\overline{a_g}$ be the residue classes of $a_f$ and $a_g$ in $R/tR=k$. Then $\overline{a_f}, \overline{a_g}\in k^*$. Let $\lambda  =\overline{a_f}/\overline{a_g}$ so that $a_f-\lambda a_g\in tR$. Then $\mathrm{ord}_{c_\infty}(f-\lambda g)\geq \ell+1$. On the other hand, since $\lambda \in k^*$, we have $[(f-\lambda g)\tau,g\tau]=[f\tau,g\tau]=\tau.$

The vector field $\tau$ has a pole of order $2g-2$ at $\infty$. Write $n_\xi=\mathrm{ord}_{c_\infty} \xi=2-2g+\mathrm{ord}_{c_\infty}(f)$ and $n_\nu=\mathrm{ord}_{c_\infty} \nu=2-2g+\mathrm{ord}_{c_\infty}(g)$. By construction, we have $n_\xi\neq n_\nu$. 
It follows that  
\begin{equation}\label{main-ord-equality}
    2-2g=\mathrm{ord}_{c_\infty}(\tau)=\mathrm{ord}_{c_\infty}([\xi,\nu])=n_\xi+n_\nu -1=2(2-2g)+\mathrm{ord}_{c_\infty}(f)+\mathrm{ord}_{c_\infty}(g)-1.
\end{equation}
Since $g\geq 1$ and $\mathrm{ord}_{c_\infty}(f)$ and $\mathrm{ord}_{c_\infty}(f)$ are both non-positive, this is impossible.
\end{proof}

\begin{corollary} \label{cor:nonrat-plane-curve} Let $C\subset \mathbb{A}^2$ be an irreducible non-rational smooth affine curve with a unique place at infinity. Then the width of $\mathrm{Vec}(C)$ is larger than or equal to two.
\end{corollary}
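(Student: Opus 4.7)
The plan is to reduce the corollary to Theorem~\ref{main2} by verifying the only hypothesis of the theorem that is not already present in the corollary, namely the triviality of the tangent sheaf $\mathcal{T}_C$. For a smooth affine plane curve, this is a classical observation coming from the Jacobian criterion, so the argument will be short.

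First I would fix a defining equation $f(x,y) \in k[x,y]$ of $C$, so that $C = V(f) \subset \mathbb{A}^2$, and introduce the ``Hamiltonian'' vector field on $\mathbb{A}^2$ given by
\[
\xi_f = \frac{\partial f}{\partial y}\,\frac{\partial}{\partial x} - \frac{\partial f}{\partial x}\,\frac{\partial}{\partial y}.
\]
A direct computation gives $\mathcal{L}_{\xi_f}(f) = 0$, which shows that $\xi_f$ is tangent to every level set of $f$ and in particular restricts to a global section of $\mathcal{T}_C$.

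Next I would show that this restriction is nowhere vanishing. At any closed point $p \in C$, smoothness of $C$ at $p$ is equivalent, by the Jacobian criterion applied to the hypersurface $V(f) \subset \mathbb{A}^2$, to the non-vanishing of at least one of the partials $\partial f/\partial x$, $\partial f/\partial y$ at $p$. Hence the coefficients of $\xi_f$ do not simultaneously vanish at $p$, and $\xi_f|_C$ is a nowhere vanishing global vector field on $C$. Since $\mathcal{T}_C$ is an invertible sheaf on the smooth curve $C$, admitting such a nowhere vanishing global section it must be trivial: $\mathcal{T}_C \cong \mathcal{O}_C$.

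With triviality of $\mathcal{T}_C$ established, the hypotheses of Theorem~\ref{main2} are all met, and its conclusion gives that the width of $\mathrm{Vec}(C)$ is at least two. There is no real obstacle here: the content lies entirely in Theorem~\ref{main2}, and the deduction is a matter of combining smoothness of $C$ with the ambient plane structure via the Jacobian vector field.
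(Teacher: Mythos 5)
Your proof is correct. Both you and the paper reduce the corollary to Theorem~\ref{main2}, the only point to check being the triviality of $\mathcal{T}_C$; you differ only in how you establish that. The paper argues sheaf-theoretically: $\mathcal{T}_C=\Lambda^2\mathcal{T}_{\mathbb{A}^2}|_C\otimes\mathcal{C}_{C/\mathbb{A}^2}$ by adjunction, and since $k[x,y]$ is factorial the ideal sheaf $\mathcal{I}_C$, hence the conormal sheaf $\mathcal{I}_C/\mathcal{I}_C^2$, is trivial, as is $\Lambda^2\mathcal{T}_{\mathbb{A}^2}$. You instead exhibit an explicit nowhere vanishing global section, the Hamiltonian field $\xi_f=f_y\partial_x-f_x\partial_y$ of a generator $f$ of $\mathcal{I}_C$, which annihilates $f$ and is nonzero at every point of $C$ by the Jacobian criterion; a line bundle on a curve with a nowhere vanishing section is trivial. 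The two arguments are really the same mechanism (your $\xi_f$ is the contraction of $\partial_x\wedge\partial_y$ against $df$, i.e.\ the image of the canonical trivializing section under the paper's isomorphism), but yours is more constructive: it hands you the generator $\tau$ of $\mathrm{Vec}(C)$ as an $\mathcal{O}(C)$-module that the proof of Theorem~\ref{main2} then works with, at the cost of being tied to the hypersurface case, whereas the paper's version extends verbatim to, say, complete intersection curves in $\mathbb{A}^n$. One small point to make explicit: you should take $f$ to be a generator of the full ideal of $C$ (which exists and is irreducible since $k[x,y]$ is a UFD), as the Jacobian criterion in the form you use it requires this.
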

\begin{proof}
Indeed, since $C$ is a smooth plane curve, we have $\mathcal{T}_C=\Lambda^2 \mathcal{T}_{\mathbb{A}^2}|_C \otimes \mathcal{C}_{C/\mathbb{A}^2} $, where  $\mathcal{C}_{C/\mathbb{A}^2}$ is the conormal sheaf of $C$ in $\mathbb{A}^2$. Since $\mathbb{A}^2$ is factorial, the ideal sheaf  $\mathcal{I}_C\subset \mathcal{O}_{\mathbb{A}^2} $
of $C$ is isomorphic to the trivial invertible sheaf $\mathcal{O}_{\mathbb{A}^2}$, and so $\mathcal{C}_{C/\mathbb{A}^2}=\mathcal{I}_C/\mathcal{I}_C^2$ is isomorphic to $\mathcal{O}_C$. Since $\Lambda^2 \mathcal{T}_{\mathbb{A}^2}\simeq \mathcal{O}_{\mathbb{A}^2}$ is the trivial invertible sheaf as well, we conclude that $\mathcal{T}_C$ is trivial. The assertion of the corollary now follows from Theorem \ref{polefiltration}. 
\end{proof}

A simple class of curves satisfying the hypotheses of Theorem $\ref{main2}$ is the following family of affine hyperelliptic curves, borrowed from the paper of \name{Billig} and \name{Futorny} \cite{BF} where many properties of their Lie algebras of vector fields algebras had been established.

\begin{example}\label{example-hyper-elliptic} Let $g\geq 1$,  and let $C\subset \mathbb{A}^2=\mathrm{Spec}(\mathbb{C}[x,y])$ be the smooth affine curve defined by the equation $y^2=h(x)$ for some separable  polynomial  $h(x)$ of degree $2g+1$.  The smooth projective model $\bar{C}$ of $C$ is a hyperelliptic curve on which $\mathrm{pr}_x\colon C\to \mathbb{A}^1$ extends to a double cover $\pi\colon\bar{C}\to \mathbb{P}^1$. Since $\mathrm{deg}(h)=2g+1$ is odd, it follows from the Riemann--Hurwitz formula that $\bar{C}$ has genus $g$ and that $\infty=\mathbb{P}^1\setminus \mathbb{A}^1$ is a branch point of $\pi$. So, $\bar{C}\setminus C=\pi^{-1}(\infty)$ consists of a unique point, which shows that $C$ has a unique place at infinity. By Corollary \ref{cor:nonrat-plane-curve}, the width of the Lie algebra $\mathrm{Vec}(C)$ is larger than or equal to two. 

We note that in \cite[Section 5]{BF}, the considered filtrations on $\mathcal{O}(C)$ and $\Ve(C)$ coincide with those induced by the pole order at the unique place at infinity of $C$. 
\end{example}

Recall that an element $\eta$ in the Lie algebra $\Ve(X)$ of vector fields on a smooth affine variety $X$ is called {\it semisimple} if the map $\ad(\eta)\colon\Ve(X)\to \Ve(X)$, $\xi \mapsto [\eta,\xi]$, has an eigenvector, and that $\eta$ is called {\it nilpotent} if there is a nonzero vector annihilated by $\ad(\eta)$. The following corollary generalizes \cite[Theorem 5.3]{BF} which corresponds to the special curves considered in Example \ref{example-hyper-elliptic}.

\begin{corollary} \label{no-ss}
Let $C$ be an irreducible smooth affine curve with a unique place at infinity. Assume that $C$ is not rational and that the tangent sheaf $\mathcal{T}_C$ of $C$ is trivial. Then 
\begin{enumerate}
\item  $\Ker \ad (\xi) = k \xi$;
\item $\Ve(C)$  has no non-trivial semisimple  elements;
 \item $\xi\not\in \Imm \ad \xi$;
\item $\Ve(C)$  has no non-trivial  nilpotent elements.
\end{enumerate}
\end{corollary}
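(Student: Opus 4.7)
The plan is to prove (1) and (3) directly using the tools from the proof of Theorem~\ref{main2}, and then to deduce (2) and (4) by short formal arguments.

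For (1), I use the trivialization $\Ve(C) = \O(C)\cdot \tau$ provided by the triviality of $\mathcal{T}_C$. Writing $\xi = f\tau$ and $\eta = h\tau$ with $f,h \in \O(C)$, the Leibniz identity gives $[\xi,\eta] = (f\tau(h) - h\tau(f))\tau$, so $[\xi,\eta] = 0$ translates to $\tau(h/f) = 0$ in $k(C)$. Since $\tau$ extends to a non-zero $k$-derivation of $k(C)$ and $\mathrm{tr.deg}_k k(C) = 1$ with $k$ algebraically closed, the kernel of $\tau$ on $k(C)$ is $k$, forcing $h/f \in k$ and hence $\eta \in k\xi$.

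For (3), I argue by contradiction: suppose $\xi = [\xi,\eta]$ with $\xi,\eta$ non-zero. Mimicking the pole-order argument from Theorem~\ref{main2}, I first replace $\eta$ by $\eta + \mu\xi$ for a suitable $\mu \in k$; this preserves $[\xi,\eta]$ and allows me to ensure that $\mathrm{ord}_{c_\infty}(f) \neq \mathrm{ord}_{c_\infty}(h)$, thereby avoiding leading-term cancellation in $f\tau(h) - h\tau(f)$ (the case $\eta + \mu\xi = 0$ is excluded since it would give $\eta \in k\xi$ and $[\xi,\eta] = 0 \neq \xi$). Equation~\eqref{main-ord-equality} then applies verbatim and yields $\mathrm{ord}_{c_\infty}(\xi) = \mathrm{ord}_{c_\infty}(\xi) + \mathrm{ord}_{c_\infty}(\eta) - 1$, forcing $\mathrm{ord}_{c_\infty}(\eta) = 1$. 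This contradicts $\mathrm{ord}_{c_\infty}(\eta) = \mathrm{ord}_{c_\infty}(h) + (2-2g) \leq 0$, valid for every non-zero $h \in \O(C)$ since $\mathrm{ord}_{c_\infty}(h) \leq 0$ and the genus $g \geq 1$.

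Parts (2) and (4) are then formal. For (2), if $[\eta,\xi] = \lambda\xi$ with $\xi \neq 0$ and $\lambda \neq 0$, then $\xi = [\xi,-\eta/\lambda] \in \Imm \ad \xi$, contradicting (3) applied to $\xi$. For (4), I first note that (1) implies the center of $\Ve(C)$ is trivial: $\ker \ad \xi = k\xi$ is one-dimensional, while $\Ve(C)$ is infinite-dimensional, so $\ad \xi \neq 0$ for every $\xi \neq 0$. If $\eta \neq 0$ had $(\ad\eta)^N = 0$ with $N \geq 2$ minimal, I would pick $\zeta$ with $(\ad\eta)^{N-1}\zeta \neq 0$; by (1), $(\ad\eta)^{N-1}\zeta = c\eta$ for some $c \neq 0$, giving $\eta = (1/c)[\eta,(\ad\eta)^{N-2}\zeta] \in \Imm \ad \eta$, contradicting (3).

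The main obstacle is the modification step in (3): one must check, through the same leading-coefficient case analysis as in the proof of Theorem~\ref{main2}, that equation~\eqref{main-ord-equality} remains applicable after translating $\eta$ by a scalar multiple of $\xi$, handling separately the cases where $f$ or $h$ is a non-zero constant (in which case $\tau(f) = 0$ or $\tau(h) = 0$ already prevents cancellation and the formula still applies directly).
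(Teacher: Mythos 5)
Your proof is correct, and it reaches the same conclusions by a partially different route. For part (1) the paper argues exactly as in Theorem \ref{main2}: it writes $\xi=f\tau$, $\nu=g\tau$ and derives a contradiction from the pole-order identity $\mathrm{ord}_{c_\infty}([\xi,\nu])=n_\xi+n_\nu-1$, using $g\geq 1$ and the non-positivity of $\mathrm{ord}_{c_\infty}$ on $\mathcal{O}(C)\setminus\{0\}$ (implicitly after the same normalization $\nu\mapsto\nu-\lambda\xi$ that you spell out). Your argument via $[f\tau,h\tau]=f^2\tau(h/f)\tau$ and the fact that the kernel of a nonzero derivation of a function field of transcendence degree one over $k$ (characteristic zero, $k$ algebraically closed) is exactly $k$ is cleaner and strictly more general: it needs neither the non-rationality of $C$ nor the unique place at infinity, only the triviality of $\mathcal{T}_C$; it also sidesteps the slightly awkward point in the paper's version that the order identity cannot apply when $\nu\in k\xi$. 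For (2) and (3) you invert the paper's logical order: the paper proves (2) directly (the relation $[\xi,\nu]=\lambda\nu$, $\lambda\neq 0$, would force $n_\xi=1$, impossible) and deduces (3) from it, whereas you prove (3) directly (forcing $n_\eta=1$) and deduce (2); the underlying pole-order computation is identical, and your explicit treatment of the normalization step and of the degenerate constant cases is a welcome addition rather than a deviation. Part (4) is argued exactly as in the paper, with your extra remark that (1) rules out central elements taking care of the case $\ad\eta=0$.
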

\begin{proof}
It is clear that $\xi \in \Ker(\xi)$. To prove $(1)$ assume that $[\xi,\eta] = 0$.   Write $\xi=f\tau$ and $\nu=g\tau$ for some non-zero regular functions $f$ and $g$ on $C$. From \eqref{main-ord-equality}
we have $\mathrm{ord}_{c_\infty}([\xi,\nu])=n_\xi+n_\nu -1$
which implies that
\begin{equation}\label{one-dim-Ker}
   n_\xi+n_\nu -1 =\mathrm{ord}_{c_\infty} \xi + \mathrm{ord}_{c_\infty} \nu - 1=2(2-2g)+\mathrm{ord}_{c_\infty}(f) +\mathrm{ord}_{c_\infty}(g) -1= \mathrm{ord}_{c_\infty}(0) =0
\end{equation} 
which is not possible as
 $g \ge 1$, $\mathrm{ord}_{c_\infty}(f)\le 0$, $\mathrm{ord}_{c_\infty}(g) \le 0$.

(2) We claim that the relation $[\xi, \nu] = \lambda \nu$ with $\lambda \in k$, $\lambda \neq 0$, cannot hold for non-zero elements in $\Ve(C)$. 
Indeed, otherwise, from \eqref{main-ord-equality}
we have $\mathrm{ord}_{c_\infty}([\xi,\nu])=n_\xi+n_\nu -1$
which implies that
\begin{equation}\label{no-semisimple-elements}
    n_\xi =\mathrm{ord}_{c_\infty} \xi=2-2g+\mathrm{ord}_{c_\infty}(f)=1.
\end{equation} Since $g \ge 1$ and $\mathrm{ord}_{c_\infty}(f) \le 0$, equation \eqref{no-semisimple-elements} does not hold.

(3) If $\xi\in \Imm \ad \xi$, then there is $\eta \in \Ve(C)$ such that $[\xi,\eta] = \xi$,  
but this is not possible by (2).

(4) If $\ad \xi$ is nilpotent, then there exists a non-zero $\nu \in \Imm \ad \xi$ such that $[\xi, \nu] = 0$.
By part (1), $\nu$ must then be a multiple of $\xi$, which  contradicts (3).
\end{proof}

\begin{example} \label{ex:quartic-tangent} Smooth non-hyperelliptic affine plane curves satisfying the hypotheses of Corollary \ref{cor:nonrat-plane-curve} are obtained from  smooth projective quartics $\bar{C}\subset \mathbb{P}^2$ admitting a $4$-tangent $L$, that is, a tangent line $L$ to $\bar{C}$ intersecting $\bar{C}$ at a single point $c_\infty$. Recall that a smooth quartic curve $\bar{C} \subset \mathbb{P}^2$ has genus $3$ and is not hyperelliptic. Given any $4$-tangent line $L$ to $\bar{C}$, the affine plane curve $C=\bar{C}\setminus L \subset \mathbb{P}^2\setminus L=\mathbb{A}^2$ has a unique place at infinity and is non-rational and non-hyperelliptic. An explicit example is given by the smooth quartic $\bar{C}\subset \mathbb{P}^2$ with equation $x^4+z(x^3+y^3+z^3)=0$ and the $4$-tangent line $L=\{z=0\}$, with associated affine plane curve $C=\{x^4+x^3+y^3+1=0\}\subset \mathbb{A}^2$.
\end{example}

\begin{remark} More generally, pairs $(\bar{C},c_\infty)$ consisting of a smooth projective non-hyperelliptic curve $\bar{C}$ and a point $c_\infty$ on $\bar{C}$ such that $C:=\bar{C}\setminus\{c_\infty\}$ satisfies the hypotheses of Theorem   \ref{main2} can be characterized as follows. Since $\bar{C}$ is not hyperelliptic, we have $g=g(\bar{C})\geq 3$,  and the canonical morphism $\bar{C}\to  \mathbb{P}^{g-1}=\mathbb{P}(H^0(\bar{C},\Omega_{\bar{C}/k}))$ is a closed embedding. By definition, every hyperplane section of $\bar{C}$ is a canonical divisor on $\bar{C}$. It follows that for every hyperplane $H$ of  $\mathbb{P}^{g-1}$ such that $\bar{C}\cap H$ consists of a unique point $c_\infty$, $C=\bar{C}\setminus\{c_\infty\}$ is a smooth affine curve with a unique place at infinity whose canonical sheaf (hence also  tangent sheaf) is trivial. For $g=3$, we recover the smooth plane quartics $\bar{C}\subset \mathbb{P}^2$ with a $4$-tangent line of Example \ref{ex:quartic-tangent}.
\end{remark}

\section{Lie algebras of symplectic vector fields on affine surfaces}\label{section3.1}
In this section, we consider certain natural Lie subalgebras of vector fields on smooth affine surfaces endowed with an algebraic volume form. 
We first collect basic notions and results about symplectic, Hamiltonian and locally nilpotent vector fields on symplectic affine surfaces. We then use this framework to proceed to the proof of Theorem B. 

\subsection{Hamiltonian algebraic vector fields on symplectic affine surfaces}

\begin{definition}
A \emph{symplectic affine surface} is a pair $(S,\omega)$ consisting of a smooth affine surface $S$ and a nowhere vanishing algebraic $2$-form $\omega\in H^0(S,\Lambda^2  \Omega_{S/k})$ on $S$.  

A \emph{symplectomorphism}
between symplectic affine surfaces $(S,\omega)$ and $(S',\omega')$ is a morphism $\varphi\colon S\to S'$ such that $\varphi^*\omega'=\omega.$ 
An action of an algebraic group $G$ on $S$ is said to be symplectic if the image of the corresponding group homomorphism $G\to \mathrm{Aut}_k(S)$ consists of symplectomorphisms of $(S,\omega)$.
\end{definition}

For a symplectic affine surface $(S,\omega)$, we have  $H^0(S,\Lambda^2 \Omega_{S/k})=\O(S)\cdot \omega$.
Furthermore, since $\omega$ is in particular non-degenerate, the interior contraction induces an isomorphism 
\begin{equation}\label{contraction-iso}
\Psi_\omega\colon\Ve(S)\to H^{0}(S,\Omega_{S/k}),\quad \xi \mapsto i_{\xi}\omega.
\end{equation}

\subsubsection{Recollection on symplectic and Hamiltonian vector fields} 
Let $(S,\omega)$ be a symplectic affine surface. 
For a vector field $\xi \in\Ve(S)$, we denote by $\mathcal{L}_{\xi}$ the Lie derivative with respect to $\xi$. Recall that by Cartan's formula, for every algebraic $p$-form $\alpha$ on $S$ and every $\xi \in \Ve(S)$, we have $\mathcal{L}_{\xi}\alpha=d(i_{\xi}\alpha)+i_{\xi}(d\alpha)$ where $d$ is the de Rham differential on algebraic forms. Since $\omega$ is a closed form, we have simply $\mathcal{L}_{\xi}\omega=d(i_{\xi}\omega)=\Div_\omega(\xi)\omega$. 
\begin{definition}
A \emph{symplectic algebraic vector field} on a symplectic affine surface $(S,\omega)$ is an element $\xi$ of  $\Ve(S)$ such that
$\mathcal{L}_{\xi}\omega=0$, equivalently such that $i_{\xi}\omega\in H^{0}(S,\Omega_{S/k})$ is a closed $1$-form. A symplectic vector field $\xi$ on $S$ is called \emph{Hamiltonian} if the $1$-form $i_{\xi}\omega$ is exact. 
\end{definition}

For every  $\xi,\nu \in \Ve(S)$ , we have the identities  \begin{equation}\label{LbracketContraction}
\mathcal{L}_{[\xi,\nu]}\omega=\mathcal{L}_{\xi}\mathcal{L}_{\nu}\omega-\mathcal{L}_{\nu}\mathcal{L}_{\xi}\omega\quad\textrm{and}\quad i_{[\xi,\nu]}\omega=\mathcal{L}_{\xi}i_{\nu}\omega-i_{\nu}\mathcal{L}_{\xi}\omega=i_{\xi}\mathcal{L}_{\nu}\omega-\mathcal{L}_{\nu}i_{\xi}\omega.
\end{equation}
These imply that the $k$-vector subspace $\mathrm{VP}_\omega(S)\subseteq\mathrm{Vec}(S)$ of symplectic vector fields is a Lie subalgebra and that the $k$-vector subspace $\mathcal{H}_\omega(S)\subseteq \mathrm{VP}_\omega(S)$ of Hamiltonian vector fields is a Lie subalgebra. The second identity implies in addition that  $\mathcal{H}_\omega(S)$ contains  $[\mathrm{VP}_\omega(S),\mathrm{VP}_\omega(S)]$, hence is a Lie ideal of $\mathrm{VP}_\omega(S)$. In fact, $\mathcal{H}_\omega(S)$ is equal to the kernel of the $k$-linear map $$\mathrm{VP}_\omega(S)\to \mathrm{H}^1_{\mathrm{dR}}(S),\; \xi \mapsto \overline{i_\xi \omega}, $$
where $\mathrm{H}^*_{\mathrm{dR}}(S)$ denotes the cohomology of the naive algebraic de Rham complex $$0\to \mathcal{O}(S)\stackrel{d}{\to} H^0(S,\Omega_{S/k}) \stackrel{d}{\to} H^0(S,\Lambda^2\Omega_{S/k})\to 0$$
of the smooth affine surface $S$, see \cite{Gro66}.

\medskip 

For every $f\in \mathcal{O}(S)$, denote by $\theta_{f}\in \mathcal{H}_\omega(S)\subset \Ve(S)$ the unique vector field such that $\Psi_\omega(\theta_{f})=i_{\theta_{f}}\omega=df$ (see \eqref{contraction-iso}). 
For every two elements $f,g\in \mathcal{O}(S)$, it follows from \eqref{LbracketContraction} that $i_{[\theta_{f},\theta_{g}]}\omega=\mathcal{L}_{\theta_f}i_{\theta_g}\omega=d(\omega(\theta_{g},\theta_{f}))$. Since $\Psi$ is an isomorphism, we have  $[\theta_{f},\theta_{g}]=\theta_{\omega(\theta_{g},\theta_{f})}$, and we define the \emph{Poisson bracket} of $f$ and $g$ as the element $\{f,g\}_\omega=\omega(\theta_g,\theta_f)$  of $\mathcal{O}(S)$. 
 Equivalently, $\{f,g\}_\omega$ is the unique element of $\O(S)$ such that 
 \begin{equation}\label{PoissonB}
 df\wedge dg=(i_{\theta_f}\omega )\wedge (i_{\theta_g} \omega)=(\{f,g\}_\omega)\omega. 
 \end{equation}
 For every $f\in \O(S)$, the $k$-linear homomorphism $\{f,\cdot\}_\omega:\O(S)\to \O(S)$ is a $k$-derivation equal to the Lie derivative $\mathcal{L}_{\theta_f}=i_{\theta_f}\circ d$.
 The bracket $\{\cdot,\cdot\}_\omega$ defines a structure of Lie algebra on $\mathcal{O}(S)$ for which the surjective $k$-linear homomorphism 
\begin{equation}\label{PoissonIso}
\Theta_\omega\colon\O(S) \to \mathcal{H}_\omega(S), \; f\mapsto \theta_f
\end{equation} is a homomorphism of Lie algebras with kernel $k\subset \O(S)$, inducing an isomorphism 
\begin{equation}\label{IsoLie}
(\mathcal{O}(S)/k,\{\cdot,\cdot\}_\omega)\cong(\mathcal{H}_\omega(S),[\cdot,\cdot]).
\end{equation}

\medskip 

Let $(S,\omega)$ be a symplectic affine surface. Denote by  $E_\omega(S) \subset \O(S)$ the 
image of the $k$-linear map $\mathrm{Div}_\omega\colon\Ve(S)\to \O(S)$. Since every algebraic $1$-form $\alpha$ on $S$ is equal to $i_\xi\omega$ for some uniquely determined element $\xi=\Psi_\omega^{-1}(\alpha)$ of $\Ve(S)$, the identity $\mathcal{L}_\xi\omega=d(i_\xi\omega)=\mathrm{Div}_\omega(\xi)\omega$ implies that $E_\omega(S)$ consists precisely of the elements $h\in \O(S)$ such that the $2$-form $h\omega\in H^0(S,\Lambda^2\Omega_{S/k})$ is exact. It follows in turn from \eqref{PoissonB} that $E_\omega(S)$ is a Lie ideal of
 $(\mathcal{O}(S),\{\cdot,\cdot\}_\omega)$ containing $\{\mathcal{O}(S),\mathcal{O}(S)\}_\omega$ and that the quotient $\mathcal{O}(S)/E_\omega$ is isomorphic to 
$\mathrm{H}^2_{\mathrm{dR}}(S)$. The image of $E_\omega(S)$  under the surjective homomorphism \eqref{PoissonIso} is then a Lie ideal $\mathcal{E}_\omega(S)$ of $\mathcal{H}_\omega(S)$ of finite codimension. This ideal contains $[\mathcal{H}_\omega(S),\mathcal{H}_\omega(S)]$.

\subsubsection{Locally nilpotent vector fields and additive group actions}
Recall that a $k$-derivation  $\partial$ of the coordinate ring of an affine algebraic variety $X$ is called \emph{locally nilpotent} if for every $f \in \O(X)$ there exists $s \in \mathbb{N}$ such that $\partial^sf = 0$. There is a well-known one-to-one correspondence between algebraic actions of the additive group  $\mathbb{G}_a=(\mathrm{Spec}(k[t]),+)$ on $X$
and locally nilpotent $k$-derivations of $\O(X)$. 
It is given by associating to an action $\sigma\colon\mathbb{G}_a\times X\to X$ the $k$-derivation $$\partial_\sigma=\frac{d}{dt}|_{t=0}\circ\sigma^{*}\in\mathrm{Der}_{k}(\O(X),\O(X)),$$ 
which, under the identification $\mathrm{Der}_{k}(\O(X),\O(X))=\Ve(X)$, coincides with the tangent vector field to the orbits of $\sigma$. 
\begin{lemma}\label{lem:GaSymp}Every algebraic $\mathbb{G}_a$-action on a symplectic affine surface $(S,\omega)$ is symplectic. 
\end{lemma}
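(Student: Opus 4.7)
My plan is to show directly that the action preserves $\omega$, i.e.\ $\sigma_t^*\omega=\omega$ for all $t\in\mathbb{G}_a$. Since $\omega$ is a trivializing global section of the invertible sheaf $\Lambda^2\Omega_{S/k}$, the pullback $\sigma_t^*\omega$ is equal to $u(t)\cdot\omega$ for a uniquely determined element $u(t)\in\mathcal{O}(S)$; varying $t$ algebraically, this produces an element $u\in\mathcal{O}(\mathbb{A}^1\times S)=\mathcal{O}(S)[t]$ with $u(0)=1$. It therefore suffices to prove that $u\equiv 1$.

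To get at $u$, I would exploit the $\mathbb{G}_a$-cocycle identity $\sigma_{t+s}=\sigma_t\circ\sigma_s$. Let $\xi\in\Ve(S)$ denote the vector field corresponding to $\partial_\sigma$. Differentiating $\sigma_{t+s}^*\omega$ at $s=0$ yields
$$u'(t)\,\omega=\frac{d}{dt}\sigma_t^*\omega=\sigma_t^*(\mathcal{L}_\xi\omega)=\sigma_t^*(\Div_\omega(\xi)\,\omega)=(\sigma_t^*\Div_\omega(\xi))\,u(t)\,\omega,$$
so in the polynomial ring $\mathcal{O}(S)[t]$ we obtain the first-order linear ODE $u'(t)=g(t)\,u(t)$ with $g(t):=\sigma_t^*\Div_\omega(\xi)$ and $u(0)=1$. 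The local nilpotency of $\partial_\sigma$ enters precisely here: it guarantees that $\sigma_t^*f=\exp(t\partial_\sigma)f=\sum_{n}\frac{t^n}{n!}\partial_\sigma^n(f)$ is a finite sum for every $f\in\mathcal{O}(S)$, so $g(t)$ is an honest polynomial in $t$ rather than merely a formal power series.

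I would finally read off $u=1$ from a degree count in the integral domain $\mathcal{O}(S)[t]$. If $u$ had degree $d\geq 1$ in $t$, then $u'$ would have degree $d-1$, while $g\cdot u$ would have degree at least $d$ (using that $\mathcal{O}(S)$ is a domain and $k$ has characteristic zero), contradicting the ODE unless $g=0$, in which case $u'=0$ also forces $d=0$. Hence $d=0$, so $u=u(0)=1$, and consequently $\sigma_t^*\omega=\omega$ for all $t$, meaning that $\sigma$ is symplectic. The only real technical point in the plan is the clean passage from the infinitesimal identity $\mathcal{L}_\xi\omega=\Div_\omega(\xi)\omega$ to the ODE satisfied by the global section $u\in\mathcal{O}(S)[t]$; once this is in place, local nilpotency plus the one-line degree check finishes the argument.
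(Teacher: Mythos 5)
Your proof is correct, and it takes a genuinely different route from the paper's. The paper argues globally: writing $\sigma(t,\cdot)^*\omega=f_t\omega$ with $f_t\in\mathcal{O}(S)^*$ (the coefficient is a unit because $\sigma(t,\cdot)$ is an automorphism and $\omega$ is nowhere vanishing), it observes that the resulting morphism $\mathbb{G}_a\times S\to\mathbb{A}^1\setminus\{0\}$ must factor through the projection to $S$ because $\mathbb{G}_a$ admits no non-constant morphism to $\mathbb{G}_m$, and then uses the cocycle identity together with the structure of $\mathbb{G}_a$-orbits to force the resulting function $F$ to satisfy $F=F^2$, hence $F\equiv1$. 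You instead work infinitesimally: the flow identity $\frac{d}{dt}\sigma_t^*\omega=\sigma_t^*\mathcal{L}_\xi\omega$ converts the problem into the ODE $u'=gu$ in $\mathcal{O}(S)[t]$ with $u(0)=1$, which you dispose of by a degree count in $t$ (valid since $\mathcal{O}(S)$ is a domain and $\mathrm{char}\,k=0$). Both arguments ultimately exploit the same rigidity of $\mathbb{G}_a$ --- the paper via $\mathcal{O}(\mathbb{G}_a)^*=k^*$, you via polynomiality of $u$, which is exactly where local nilpotency enters. The one step you flag as technical does go through: since $\partial_\sigma$ is locally nilpotent, $\mathcal{L}_\xi$ is locally nilpotent on $\Lambda^2\Omega_{S/k}$ (its iterates on $f\,dg\wedge dh$ are sums of terms $(\partial^af)\,d(\partial^bg)\wedge d(\partial^ch)$), so $\sigma_t^*\omega=\exp(t\mathcal{L}_\xi)\omega=\sum_n\frac{t^n}{n!}\mathcal{L}_\xi^n\omega$ is a finite sum, giving both the polynomiality of $u$ and the differentiation formula. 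Your version has the advantage of yielding the infinitesimal statement $\Div_\omega(\partial_\sigma)=0$ explicitly as a byproduct and of generalizing verbatim to volume forms in any dimension; the paper's version avoids the exponential formalism entirely and needs nothing beyond the absence of non-constant units on $\mathbb{G}_a$.
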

\begin{proof} 
Let $\sigma\colon\mathbb{G}_a\times S\to S$ be a non-trivial $\mathbb{G}_a$-action. For every $t\in k$, there exists a unique element $f_t\in \O(S)^*$ such $\sigma(t,\cdot)^*\omega=f_t\omega$. Since there is no non-constant morphism from $\mathbb{G}_a$ to $\mathbb{A}^1\setminus \{0\}$, there exists a unique element $F\in \O(S)^*$ such that the morphism 
$$ \alpha\colon\mathbb{G}_a\times S\to \mathbb{A}^1\setminus \{0\}, \quad (t,x)\mapsto f_t(x)$$
factors as $F\circ \mathrm{p}_2$. 
Since $\alpha(t+t',x)=\alpha(t',\sigma(t,x))\cdot  \alpha(t,x)$, we have $F(x)=F(\sigma(t,x))F(x)$ for all $t\in k$ and $x\in S$. The fact that a $\mathbb{G}_a$-orbit consists either of a fixed point or of a curve isomorphic to $\mathbb{A}^1$ implies in turn that $F(x)=F(x)^2$ for all $x\in S$. Thus, $F$ is the constant function $1$ on $X$, and hence, $f_t=1$ for all $t\in k$. 
\end{proof}


It follows from Lemma \ref{lem:GaSymp} that every locally nilpotent vector field $\partial$ on a symplectic affine surface $(S,\omega)$ is symplectic. A $\mathbb{G}_a$-action on $S$ is called \emph{Hamiltonian} if its associated locally nilpotent vector field $\partial$ is Hamiltonian. We denote by $\langle \mathrm{LNV}(S)\rangle \subseteq \mathrm{VP}_\omega(S)$ and $\langle \mathcal{H}_\omega\mathrm{LNV}(S)\rangle \subseteq \mathcal{H}_\omega(S)$ the Lie subalgebras generated respectively by locally nilpotent and Hamiltonian locally nilpotent vector fields on $(S,\omega)$. Summing up, for a symplectic affine surface $(S,\omega)$ we have the following diagram of Lie subalgebras of $\mathrm{VP}_\omega(S)$:
\begin{equation}
\begin{tikzcd}
 \langle \mathrm{LNV}(S) \rangle  \arrow[r]& \mathrm{VP}_\omega(S) &  \\
 \langle \mathcal{H}_\omega\mathrm{LNV}(S) \rangle \arrow[r] \arrow[u] & \mathcal{H}_\omega(S) \arrow[u] &  \left[\mathrm{VP}_\omega(S),\mathrm{VP}_\omega(S)\right] \arrow[l] \\
 & \mathcal{E}_\omega(S) \arrow[u] &  \left[\mathcal{H}_\omega(S),\mathcal{H}_\omega(S)\right]  \arrow[u] \arrow[l]
\end{tikzcd}
\end{equation}
 Symplectic affine surfaces thus provide a large class of surfaces for which the question of the simplicity and if so, of their bracket width, of the natural Lie algebras $\langle \mathcal{H}_\omega\mathrm{LNV(S)}\rangle$,  $\mathcal{E}_\omega(S)$ and  $[\mathcal{H}_\omega(S),\mathcal{H}_\omega(S)]$, as well as their intersections, can be further studied. 

\begin{example} \label{ex:sympA2}Let $S=\mathbb{A}^2=\mathrm{Spec}(k[x,y]$ and $\omega=\omega_{\mathbb{A}^2}=dx\wedge dy$. Then we have $$\mathrm{VP}_{\omega_{\mathbb{A}^2}}(\mathbb{A}^2)=\mathcal{H}_{\omega_{\mathbb{A}^2}}(\mathbb{A}^2)=\mathcal{E}_{\omega_{\mathbb{A}^2}}(\mathbb{A}^2)=[\mathcal{H}_{\omega_{\mathbb{A}^2}}(\mathbb{A}^2),\mathcal{H}_{\omega_{\mathbb{A}^2}}(\mathbb{A}^2)]=\lielnd{\mathbb{A}^2}$$
(see, e.g., \cite[Lemma~2]{Sh81}). Furthermore, this Lie algebra is simple and it has width equal to one by  Proposition \ref{affinespace}.
\end{example}


\begin{example}\label{ex:nonratxA1} Let $C$ be a smooth non-rational affine curve with a nowhere vanishing algebraic $1$-form $\omega_C$.  The surface $S=C\times \mathbb{A}^1$ is endowed with the nowhere vanishing $2$-form $\omega=\omega_C\wedge dx$. Since $C$ is not rational, we have  $\mathrm{H}^1_{\mathrm{dR}(S)}\cong \mathrm{H}^1_{\mathrm{dR}(C)}\neq 0$. So $\mathcal{H}_\omega(S)$ is a proper Lie ideal of $\mathrm{VP}_\omega(S)$. Since $\mathrm{H}^2_{\mathrm{dR}}(S)=0$, it follows that $E_\omega(S)=\O(S)$ and hence, that $\mathcal{E}_\omega(S)=\mathcal{H}_\omega(S)$. Since $C$ is not rational, every $\mathbb{G}_a$-action on $S$ is given by a locally nilpotent $k$-derivation of $\O(S)=\O(C)[x]$ of the form $\delta_f=f\frac{\partial}{\partial x}$, where $f\in \O(C)$. The Lie algebra $\lielnd{S}$ is thus abelian. Furthermore, since the  $\mathbb{G}_a$-action determined by $\delta_f$ is Hamiltonian if and only if the $1$-form $f\omega_C$ on $C$ is exact, $\langle \mathcal{H}_\omega\mathrm{LNV}(S)\rangle$ is a proper subalgebra of $\lielnd{S}$. 
\end{example}

\begin{proposition} \label{ex:sympT2}For the symplectic torus   $\mathbb{T}^2=\mathrm{Spec}\, k[x^{\pm 1},y^{\pm 1}]$ endowed with the $2$-form $\omega_{\mathbb{T}^2}=\frac{dx}{x}\wedge \frac{dy}{y}$, the following hold:
\begin{enumerate} 
\item $\langle \mathcal{H}_{\omega_{\mathbb{T}^2}}\mathrm{LNV}(\mathbb{T}^2)\rangle=\langle \mathrm{LNV}(\mathbb{T}^2)\rangle=0$
\item $\mathcal{H}_{\omega_{\mathbb{T}^2}}(\mathbb{T}^2)$ is a proper Lie ideal of codimension $2$ of $\mathrm{VP}_{\omega_{\mathbb{T}^2}}(\mathbb{T}^2)$.
\item $\mathcal{E}_{\omega_{\mathbb{T}^2}}(\mathbb{T}^2)=[\mathcal{H}_{\omega_{\mathbb{T}^2}}(\mathbb{T}^2),\mathcal{H}_{\omega_{\mathbb{T}^2}}(\mathbb{T}^2)]=\mathcal{H}_{\omega_{\mathbb{T}^2}}(\mathbb{T}^2)$ is a simple Lie algebra of width one. 
\end{enumerate}
\end{proposition}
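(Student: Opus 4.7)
The plan is to reduce everything to the explicit Poisson-algebra structure on $\mathcal{O}(\mathbb{T}^2)/k$ and to concrete computations with the monomial basis $\{x^m y^n\}_{(m,n)\in \mathbb{Z}^2}$. Throughout I abbreviate $\omega=\omega_{\mathbb{T}^2}$. For part (1), I would observe that $\mathbb{T}^2=\mathbb{G}_{\mathrm{m}}^2$ admits no non-trivial $\mathbb{G}_a$-action: restricted to an orbit, such an action composed with either projection $\mathbb{T}^2\to \mathbb{G}_{\mathrm{m}}$ gives a morphism $\mathbb{G}_a\to \mathbb{G}_{\mathrm{m}}$, which must be constant since $\mathcal{O}(\mathbb{G}_a)^\times=k^\times$. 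Equivalently, any locally nilpotent $k$-derivation of the Laurent polynomial domain $k[x^{\pm 1},y^{\pm 1}]$ annihilates the units $x$ and $y$ by multiplicativity of $\deg_\partial$, hence vanishes. Consequently $\langle \mathrm{LNV}(\mathbb{T}^2)\rangle=0$, and a fortiori $\langle \mathcal{H}_\omega\mathrm{LNV}(\mathbb{T}^2)\rangle=0$. For part (2), I would use the exact sequence of the preamble
$$0 \to \mathcal{H}_\omega(\mathbb{T}^2)\to \mathrm{VP}_\omega(\mathbb{T}^2)\to \mathrm{H}^1_{\mathrm{dR}}(\mathbb{T}^2),$$
combined with $\mathrm{H}^1_{\mathrm{dR}}(\mathbb{T}^2)=k\left[\tfrac{dx}{x}\right]\oplus k\left[\tfrac{dy}{y}\right]$, and verify surjectivity directly: the Euler fields $x\partial_x,\, y\partial_y$ lie in $\mathrm{VP}_\omega(\mathbb{T}^2)$ with $i_{x\partial_x}\omega=\frac{dy}{y}$ and $i_{y\partial_y}\omega=-\frac{dx}{x}$, giving a basis of $\mathrm{H}^1_{\mathrm{dR}}(\mathbb{T}^2)$.

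For part (3), I would transport the problem to the Poisson algebra $(\mathcal{O}(\mathbb{T}^2)/k,\{\cdot,\cdot\}_\omega)$ via $\Theta_\omega$ of \eqref{IsoLie}, with structure constants read off from \eqref{PoissonB}:
$$\{x^a y^b,\, x^c y^d\}_\omega=(ad-bc)\, x^{a+c}y^{b+d}.$$
The equality $\mathcal{E}_\omega=\mathcal{H}_\omega$ follows from $\mathcal{O}(\mathbb{T}^2)/E_\omega\cong \mathrm{H}^2_{\mathrm{dR}}(\mathbb{T}^2)=k$ together with the non-exactness of $\omega$, which implies $1\notin E_\omega$ and hence $\mathcal{O}(\mathbb{T}^2)=E_\omega\oplus k$; applying $\Theta_\omega$ (whose kernel is exactly $k$) gives $\mathcal{E}_\omega=\Theta_\omega(E_\omega)=\mathcal{H}_\omega$.

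To prove width one, given any non-zero $f=\sum_{(m,n)\in F} c_{m,n}\, x^m y^n$ modulo $k$, with finite support $F\subset \mathbb{Z}^2\setminus\{(0,0)\}$, I would choose $(a,b)\in \mathbb{Z}^2$ not parallel over $\mathbb{Q}$ to any element of $F$; such $(a,b)$ exists because $F$ excludes finitely many primitive directions among the infinitely many available in $\mathbb{Z}^2$. Then $an-bm\neq 0$ for every $(m,n)\in F$, so
$$f=\Bigl\{x^a y^b,\; \sum_{(m,n)\in F}\frac{c_{m,n}}{an-bm}\, x^{m-a} y^{n-b}\Bigr\}_\omega,$$
exhibiting $f$ as a single bracket. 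This simultaneously establishes $[\mathcal{H}_\omega,\mathcal{H}_\omega]=\mathcal{H}_\omega$.

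The main remaining task, and the trickiest step, is simplicity. Writing $e_\alpha:=x^a y^b$ for $\alpha=(a,b)$ and $\alpha\times\gamma:=ad-bc$ for $\gamma=(c,d)$, a direct computation shows that the operator $T_\beta:=\mathrm{ad}(e_{-\beta})\circ\mathrm{ad}(e_\beta)$ is diagonal on the monomial basis, with $T_\beta(e_\alpha)=-(\alpha\times\beta)^2 e_\alpha$. Given a non-zero Lie ideal $I$ and any $0\neq f=\sum_{i=1}^n c_i e_{\alpha_i}\in I$, I would pick $\beta\in \mathbb{Z}^2$ avoiding the finitely many directions parallel to $\alpha_i$, $\alpha_i-\alpha_j$, or $\alpha_i+\alpha_j$ for $i\neq j$; this simultaneously ensures that the eigenvalues $-(\alpha_i\times\beta)^2$ are non-zero and pairwise distinct, so Lagrange interpolation in $T_\beta$ applied to $f$ produces a single monomial $e_{\alpha_i}\in I$. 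From one such monomial, every other non-zero monomial $e_\gamma$ with $\gamma\not\parallel \alpha_i$ is reached by the single bracket $[e_{\gamma-\alpha_i}, e_{\alpha_i}]=(\gamma\times\alpha_i)\, e_\gamma$, while monomials parallel to $\alpha_i$ are obtained in two steps via an intermediate non-parallel direction. Hence $I=\mathcal{H}_\omega$, completing the proof. The main technical obstacle is this eigenvalue-separation argument for isolating a monomial inside an arbitrary element of $I$; once that combinatorial step is handled, everything else reduces to bookkeeping with the monomial Poisson formula.
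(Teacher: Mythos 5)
Your treatment of (1), (2), the equality $\mathcal{E}_{\omega}=\mathcal{H}_{\omega}$, and the width-one computation coincides in substance with the paper's: the paper likewise derives (1) from the absence of nontrivial $\mathbb{G}_a$-actions on $\mathbb{T}^2$, exhibits the classes of $x^{-1}dx=-i_{y\partial_y}\omega$ and $y^{-1}dy=i_{x\partial_x}\omega$ as generators of $\mathrm{H}^1_{\mathrm{dR}}(\mathbb{T}^2)$, identifies $E_{\omega}$ with the Laurent polynomials of trivial constant term, and writes an arbitrary element as a single bracket $\{x^ky^\ell,\cdot\}$ after choosing $(k,\ell)$ non-parallel to every exponent in the support. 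Where you genuinely diverge is the extraction of a monomial from a nonzero ideal $I$. The paper runs a descent on the minimal number $M$ of summands of elements of $I$: it massages a minimal element into one lying in $k[x^{\pm 1}]$, brackets with $y$ and then with suitable monomials, and derives a contradiction with minimality. You instead diagonalize $T_\beta=\mathrm{ad}(e_{-\beta})\circ\mathrm{ad}(e_\beta)$ on the monomial basis and use Lagrange interpolation to project onto an eigenline. Your mechanism is more conceptual (a generalized weight-space argument) and more readily transportable to other $\mathbb{Z}^2$-graded Lie algebras, at the cost of the eigenvalue bookkeeping below; the paper's descent is more hands-on but avoids that issue entirely.

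There is, however, a concrete flaw in your eigenvalue-separation step. Since $T_\beta(e_\alpha)=-(\alpha\times\beta)^2e_\alpha$ depends only on $(\alpha\times\beta)^2$, the monomials $e_\alpha$ and $e_{-\alpha}$ have the \emph{same} $T_\beta$-eigenvalue for every choice of $\beta$; equivalently, when $\alpha_i+\alpha_j=0$ there is no direction ``parallel to $\alpha_i+\alpha_j$'' to avoid, and the $\beta$ you request does not exist. Thus if the support of $f$ contains an antipodal pair, interpolation only isolates a two-term element $c\,e_\alpha+c'e_{-\alpha}\in I$, not a monomial. The gap is easily repaired: first replace $f$ by $\{e_\gamma,f\}$ for a monomial $e_\gamma$ with $\gamma$ non-parallel to every $\alpha_i$ and with $2\gamma$ avoiding the finitely many values $-(\alpha_i+\alpha_j)$; this element of $I$ is nonzero, its support $\{\alpha_i+\gamma\}$ contains no antipodal pairs, and your interpolation then yields a monomial. (Alternatively, apply one further bracket to the residual two-term element $c\,e_\alpha+c'e_{-\alpha}$ and interpolate once more.) With this patch your argument is complete; the remaining steps, reaching all monomials from a given one first in non-parallel and then in parallel directions, are correct as written.
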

\begin{proof}
Assertion (1) is an immediate consequence of the fact that $\mathbb{T}^2$ does not admit any non-trivial $\mathbb{G}_a$-action.  Assertion (2) follows from the observation that $\mathrm{H}^1_{\mathrm{dR}}(\mathbb{T}^2)$ is isomorphic to $k^{\oplus 2}$, generated by the classes of the $1$-forms $$x^{-1}dx=-i_{y\tfrac{\partial}{\partial y}}\omega \quad \textrm{and} \quad  y^{-1}dy=i_{x\tfrac{\partial}{\partial x}}\omega.$$
On the other hand, the class of $\omega_{\mathbb{T}^2}$ generates $\mathrm{H}^2_{\mathrm{dR}}(\mathbb{T}^2)\cong k$. It follows that the subspace $E_{\omega_{\mathbb{T}^2}}\subset k[x^{\pm 1},y^{\pm 1}]$ consists of all elements with trivial constant term. Since $\mathcal{H}_{\omega_{\mathbb{T}^2}}(\mathbb{T}^2) \cong \O(\mathbb{T}^2)/k$, it follows that $\mathcal{H}_{\omega_{\mathbb{T}^2}}(\mathbb{T}^2)=\mathcal{E}_{\omega_{\mathbb{T}^2}}(\mathbb{T}^2)$. 

To complete the proof of assertion (3), it remains to show that $\mathcal{E}_{\omega_{\mathbb{T}^2}}(\mathbb{T}^2)$ is a simple Lie algebra of width one. 
The Poisson bracket on $\O(\mathbb{T}^2)$ associated to $\omega_{\mathbb{T}^2}$ is given by $\{x,y\}=xy$ and extension by linearity and the Jacobi identity. Thus, for $(k,\ell)$ and $(m,n)$ in  $\mathbb{Z}^2\setminus (0,0)$, we have 
\begin{equation}\label{bracketH_T}
\{x^ky^\ell,x^my^n\}=(kn - \ell m)x^{k+m}y^{\ell +n}.
\end{equation}
Since every element $f$ of $\O(\mathbb{T}^2)$ with trivial constant term is a linear combination of finitely many monomials of the form $x^{\alpha_i}y^{\beta_i}$ with $(\alpha_i,\beta_i)\in \mathbb{Z}^2\setminus (0,0)$, we can find $(k,l)\in \mathbb{Z}^2\setminus (0,0)$ such that $k(\beta_i-\ell)- \ell(\alpha_i-k)\neq 0$ for every $i$. Then, setting $m_i=\alpha_i-k$ and $n_i=\beta_i-\ell$, we have $$x^{\alpha_i}y^{\beta_i}=\{x^ky^\ell, \frac{1}{k(\beta_i-\ell)- \ell(\alpha_i-k)}x^{m_i}y^{n_i}\},$$  which implies that $f=\{x^ky^\ell,g\}$ for some $g\in \O(\mathbb{T}^2)$. Thus, every element of $E_\omega(\mathbb{T}^2)$ can be written as the Poisson bracket of two elements of $E_\omega(\mathbb{T}^2)$. This implies in turn that 
$$\mathcal{E}_{\omega_{\mathbb{T}^2}}(\mathbb{T}^2)=[\mathcal{E}_{\omega_{\mathbb{T}^2}}(\mathbb{T}^2),\mathcal{E}_{\omega_{\mathbb{T}^2}}(\mathbb{T}^2)]$$ 
has width equal to one. 

It remains to show that $\mathcal{E}_{\omega_{\mathbb{T}^2}}(\mathbb{T}^2)$ is a simple Lie algebra, equivalently, that  $(E_{\omega_{\mathbb{T}^2}},\{\cdot, \cdot\})$ is a simple Lie algebra. Assume that $I\subset E_{\omega_{\mathbb{T}^2}}(\mathbb{T}^2)$ is a nonzero ideal. Using \eqref{bracketH_T}
we can show that if $I$  contains a nontrivial monomial $x^my^n$, then it contains all possible nontrivial monomials, hence is equal to $E_{\omega_{\mathbb{T}^2}}(\mathbb{T}^2)$. Indeed,
 up to exchanging $x$ and $y$, we can assume that $m\neq 0$.
 Then bracketing with  $y^{-n+1}$, we get that $$\{y^{-n+1},x^my^n\}=m(-n+1)y^{-n}y^nx^{m-1}\{y,x\}=-m(-n+1)x^my$$ belongs to  
$I$, hence that $x^my\in I$. Bracketing with $x^{-m+1}$, we obtain that $$\{x^{-m+1},x^{m}y\}=(-m+1)x^{-m}x^m\{x,y\}=(-m+1)xy$$
belongs to $I$, hence that $xy\in I$. 
Taking brackets of $xy$ with $y^{-1}$ and $x^{-1}$ respectively, we obtain the elements $x$ and $y$. Since all monomials are obtained from these two taking suitable brackets, 
we conclude that $I=E_{\omega_{\mathbb{T}^2}}(\mathbb{T}^2)$. \\
Now we have to show that any  nonzero ideal $I$ of $E_{\omega_{\mathbb{T}^2}}(\mathbb{T}^2)$ contains a monomial. Let $M$ denote the minimal number of summands of the elements of $I$. If $M=1$, we are done. So assume that $M\geq 2$ and let $f=\sum a_{i,j}x^iy^j$ be an element of $I$ with $M$ summands. Applying similar combinations of brackets as above, we see that $I$ contains an element $q$ with $M$ summands having $x$ as one of its summands. If $q\in k[x^{\pm 1},y^{\pm 1}]\setminus k[x^{\pm 1}]$ then $\{x,q\}$ is a nonzero element of $I$ which has $M-1$ summands, which is impossible. So $q\in k[x^{\pm 1}]$. Then $r=\{q(x),y\}$ is an element of $I$ with $M$ summands whose summands are all linear in $y$. Let $x^{k}y$ be one of the summands of $r$. Then $s=\{x^{-k},r\}$ is an element of $I$ with $M$ summands that has $y$ as one its summands and at least one  summand of the form 
$x^{\alpha}y$ with $\alpha \neq 0$. Then $\{y,s\}$ is a non-zero element of $I$ with at most $M-1$ summands, a contradiction.  
\end{proof}

\subsection{Locally nilpotent vector fields on Danielewski surfaces} 

By a Danielewski surface, we mean an affine surface $\Dp$ in $\mathbb{A}^3$ defined by an equation of the form $xy-p(z)=0$ for some polynomial $p(z)\in k[z]$ of degree $r\geq 1$ with simple roots in $k$. By the Jacobian criterion, $\Dp$ is smooth, and it follows from adjunction formula that   
$$\omega=\frac{dx\wedge dz}{x}|_{\Dp}=\frac{dx\wedge dy}{p'(z)}|_{\Dp}=-\frac{dy\wedge dz}{y}|_{\Dp}$$ 
is a well-defined nowhere vanishing algebraic $2$-form on $\Dp$. 
On the other hand, it immediately follows from the defining equation of $\Dp$ that $\Omega_{\Dp/k}$ is a locally free sheaf or rank $2$ globally generated by the $1$-forms $dx$, $dy$ and $dz$ with the unique relation $xdy+ydx=p'(z)dz$. The following proposition summarizes additional basic properties of the surfaces $\Dp$. 

\begin{proposition} \label{prop:DanBasics1}For a Danielewski surface $\Dp$ with $r=\deg p\geq 1$, the following hold:
\begin{enumerate} 
\item $\O(\Dp)^*=k^*$.
\item $\Omega_{\Dp/k}$ is a free $\O_{\Dp}$-module of rank 2.
\item $\mathrm{H}^1_{\mathrm{dR}}(\Dp)=0$ and $\mathrm{H}^2_{\mathrm{dR}}(\Dp)\simeq k^{\oplus (r -1)}$.
\item If $r\geq 2$, then the class of $\omega$ in $\mathrm{H}^2_{\mathrm{dR}}(\Dp)$ is nonzero.
\end{enumerate}
\end{proposition}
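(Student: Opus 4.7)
The plan is to treat the four assertions in order, using standard techniques for hypersurfaces and a Mayer--Vietoris computation for de Rham cohomology.

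For (1), I would restrict any potential unit $f\in\O(\Dp)^*$ to the principal open $\Dp\setminus\{x=0\}$. Using $y=p(z)/x$, this open is isomorphic to $\Spec k[x^{\pm 1},z]\cong\mathbb{G}_m\times\mathbb{A}^1$, whose units are $k^*\cdot x^{\mathbb{Z}}$. Thus $f|_{x\neq 0}=cx^n$ for some $c\in k^*$ and $n\in\mathbb{Z}$. Expanding $f$ in the canonical $k$-basis $\{x^iz^k\}_{i,k\geq 0}\cup\{y^jz^k\}_{j\geq 1,\,k\geq 0}$ of $\O(\Dp)$ induced by the relation $xy=p(z)$, and matching coefficients with $cx^n$, the case $n<0$ would force $p(z)^{-n}$ to divide $1$ in $k[z]$, impossible since $\deg p\geq 1$. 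Hence $n\geq 0$; applying the same argument to $f^{-1}$ forces $n\leq 0$, giving $f=c\in k^*$.

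For (2), the conormal exact sequence for $\Dp\hookrightarrow\mathbb{A}^3$ reads
\[
0\to\O_{\Dp}\xrightarrow{d(xy-p(z))}\O_{\Dp}^{\oplus 3}\to\Omega^1_{\Dp/k}\to 0,
\]
where the first arrow sends the generator of the conormal bundle to $y\,dx+x\,dy-p'(z)\,dz$. Smoothness of $\Dp$ (equivalent to simplicity of the roots of $p$ via the Jacobian criterion) says exactly that $(y,x,-p'(z))$ generates the unit ideal in $\O(\Dp)$, so the sequence splits and $\Omega^1_{\Dp/k}$ is stably free of rank $2$. Combined with $\Lambda^2\Omega^1_{\Dp/k}\cong\O_{\Dp}$ via $\omega$ and the fact that $\Dp$ is $\mathbb{A}^1$-ruled (hence $\mathrm{CH}_0(\Dp)=0$), Murthy's theorem on rank-$2$ stably free projective modules with trivial determinant over smooth affine surfaces with vanishing top Chow group yields freeness.

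For (3) and (4), I would apply Mayer--Vietoris to the affine cover $U_1=\{x\neq 0\}$, $U_2=\{y\neq 0\}$, both isomorphic to $\mathbb{G}_m\times\mathbb{A}^1$, with intersection $U_1\cap U_2\cong\mathbb{G}_m\times(\mathbb{A}^1\setminus\{z_1,\ldots,z_r\})$ where the $z_i$ are the roots of $p$. The relevant de Rham cohomologies are standard: $\mathrm{H}^1_{\mathrm{dR}}(U_i)$ is one-dimensional, spanned by $[dx/x]$ (resp.\ $[dy/y]$), and $\mathrm{H}^1_{\mathrm{dR}}(U_1\cap U_2)=k^{r+1}$ has basis $[dx/x],\,[dz/(z-z_i)]_{i=1,\ldots,r}$. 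The key identity $dy/y=-dx/x+p'(z)\,dz/p(z)$ together with the partial-fraction expansion $p'(z)/p(z)=\sum_i 1/(z-z_i)$ reduces the Mayer--Vietoris sequence to
\[
0\to\mathrm{H}^1_{\mathrm{dR}}(\Dp)\to k^2\to k^{r+1}\to\mathrm{H}^2_{\mathrm{dR}}(\Dp)\to 0,
\]
with middle map of rank $2$, yielding $\mathrm{H}^1_{\mathrm{dR}}(\Dp)=0$ and $\mathrm{H}^2_{\mathrm{dR}}(\Dp)\cong k^{r-1}$. For (4), I would track $[\omega]$: on $U_1$ one has $\omega=d(-z\,dx/x)$ and on $U_2$ one has $\omega=d(z\,dy/y)$, so the Mayer--Vietoris connecting homomorphism sends $[\omega]$ to the class $[-z\,dx/x-z\,dy/y]=[-zp'(z)\,dz/p(z)]=-\sum_i z_i[dz/(z-z_i)]$ modulo exact forms. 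This class is nonzero in the cokernel $\mathrm{H}^2_{\mathrm{dR}}(\Dp)$ iff $(z_1,\ldots,z_r)$ is not a scalar multiple of $(1,\ldots,1)$, which is precisely the case when the $z_i$ are distinct and $r\geq 2$.

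The main obstacle is part (2): in the critical range rank $=$ dimension, freeness is not automatic from stable freeness plus trivial determinant. Natural explicit candidates for a frame---pullbacks of the coordinate differentials $dx,dy,dz$, or Hamiltonian vector fields of the coordinate functions---all fail to span on some curve in $\Dp$ where the relevant $2\times 2$ minors vanish, so one cannot simply exhibit a basis and must instead invoke a cancellation-type result such as Murthy's theorem.
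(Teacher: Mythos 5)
Your treatment of parts (3) and (4) has a genuine gap: the two opens $U_1=\{x\neq 0\}$ and $U_2=\{y\neq 0\}$ do \emph{not} cover $\Dp$. On $\Dp$ the locus $\{x=y=0\}$ is the nonempty finite set of $r$ points $(0,0,z_i)$ with $p(z_i)=0$, so $U_1\cup U_2=\Dp\setminus\{(0,0,z_1),\dots,(0,0,z_r)\}$ and your Mayer--Vietoris sequence computes $\mathrm{H}^{*}_{\mathrm{dR}}(U_1\cup U_2)$, not $\mathrm{H}^{*}_{\mathrm{dR}}(\Dp)$; the terminal term $\mathrm{H}^2_{\mathrm{dR}}(\Dp)\to 0$ in your displayed sequence is unjustified as written. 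The numbers you obtain happen to be correct, because for a finite closed subset $F$ of a smooth surface $S$ the restriction $\mathrm{H}^i_{\mathrm{dR}}(S)\to \mathrm{H}^i_{\mathrm{dR}}(S\setminus F)$ is an isomorphism for $i\leq 2$ (cohomological purity in codimension $2$, or comparison with singular cohomology over $\mathbb{C}$ plus the Lefschetz principle), but this semipurity input is a nontrivial extra step that must be invoked explicitly -- and its injectivity for $i=2$ is also what would make your computation of $[\omega]$ in (4) conclusive. The paper sidesteps the problem by covering $\Dp$ with the complements $U_0=\Dp\setminus L_q$ and $U_q=\Dp\setminus L_0$ of the two disjoint closed curves into which $\mathrm{pr}_x^{-1}(0)$ splits; these genuinely cover $\Dp$, are affine ($U_0\cong\mathbb{A}^2$, $U_q\cong \Dq$ with $\deg q=r-1$), and permit an induction on $r$. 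Once the cover is repaired or the purity step is added, your explicit formula $[\omega]=-\sum_i z_i[dz/(z-z_i)]$ and the criterion ``nonzero iff the $z_i$ are not all equal'' give a rather more direct proof of (4) than the paper's reduction to $D_{z(z-1)}$.

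Parts (1) and (2) are correct but follow different routes from the paper, which exploits the $\mathbb{G}_a$-torsor structure $\rho\colon \Dp\to\breve{\mathbb{A}}^1$ over the affine line with $r$ origins for both. Your proof of (1) by localizing at $x$ and comparing the $x^{-n}$-coefficient with divisibility by $p(z)^{n}$ is elementary and complete. For (2), the paper's relative cotangent sequence of $\rho$ has trivial outer terms and splits because $\Dp$ is affine, which exhibits a global frame directly; your conormal-sequence argument only gives stable freeness, and you then genuinely need a cancellation theorem in the critical range $\mathrm{rank}=\dim$. The appeal to Murthy--Swan (rank-$2$ projectives over a smooth affine surface over an algebraically closed field are classified by $(c_1,c_2)$), or equivalently to Suslin's cancellation theorem, is legitimate -- and since a stably free module automatically has $c_1=c_2=0$, you do not even need the $\mathrm{CH}_0=0$ remark -- but it is a much heavier tool; note also that your closing claim that one ``cannot simply exhibit a basis'' is contradicted by the paper's proof, which does produce one from a splitting of the cotangent sequence.
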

\begin{proof}
The projection $\mathrm{pr_x}\colon\Dp\to \mathbb{A}^1$ is the algebraic quotient morphism of the fixed point free $\mathbb{G}_a$-action associated to the locally nilpotent $k$-derivation $p'(z)\partial_y+x\partial_z$ of $\O(X)$. The fiber $\mathrm{pr}_x^{-1}(0)$ is the union of $r$ disjoint $\mathbb{G}_a$-orbits, one for each of the roots of $p$ whereas all other fibers consist of a single $\mathbb{G}_a$-orbit. The geometric quotient of $\Dp$ is isomorphic to the affine line with $r$ origins $\delta\colon\breve{\mathbb{A}}^1\to \mathbb{A}^1$ obtained from $\mathbb{A}^1$ by replacing the origin $\{0\}$ by $r$ distinct points, one for each of the $\mathbb{G}_a$-orbits in $\mathrm{pr}_x^{-1}(0)$, and the induced morphism $\rho\colon\Dp\to \breve{\mathbb{A}}^1$ is a $\mathbb{G}_a$-torsor. Assertion (1) then follows from the fact that the pullback homomorphism $\rho^*\colon k^*=H^0(\breve{\mathbb{A}^1},\O_{\breve{\mathbb{A^1}}}^*)\to \O(\Dp)^*$ is an isomorphism. 

Since $\rho$ is a $\mathbb{G}_a$-torsor, we have $\Omega^1_{\Dp/\breve{\mathbb{A}^1}}\cong \rho^*\O_{\breve{\mathbb{A}^1}}=\O_{\Dp}$, and since $\delta\colon\breve{\mathbb{A}}^1\to \mathbb{A}^1$ is an \'etale morphism, we have $\Omega_{\breve{\mathbb{A}}^1/k}\cong \delta^*\Omega_{\mathbb{A}^1/k}\cong \O_{\breve{\mathbb{A}}^1}$. The relative cotangent sequence of $\rho$ thus reads 
$$0\to \rho^*\Omega_{\breve{\mathbb{A}}^1/k}=\O_{\Dp}\to \Omega_{\Dp/k}\to \Omega_{\Dp/\breve{\mathbb{A}}^1}=\O_{\Dp} \to 0.$$
Since $\Dp$ is affine, the latter splits, and the choice of such a splitting yields an isomorphism  $\Omega_{\Dp/k}\cong \O_{\Dp}^{\oplus 2}$ which proves (2).   

To prove assertion (3), we proceed by induction on the degree $r$ of $p$. If $r=1$, then $\Dp\cong \mathbb{A}^2$, and the assertion is clear. Now assume that $r\geq 2$. Without loss of generality, we can further assume that $p(z)=zq(z)$ for some polynomial $q$ of degree $r-1$ with simple roots and such that $q(0)\neq 0$. Then $\mathrm{pr}_x^{-1}(0)$ is the disjoint union of the curves $L_0=\{x=z=0\}$ and $L_q=\{x=q(z)=0\}$. Set $U_0=\Dp\setminus L_q$ and $U_q=\Dp\setminus L_0$.  The Mayer--Vietoris long exact sequence of algebraic de Rham cohomology for the covering of $\Dp$ by the affine open subsets $U_0$ and $U_q$ yields the exact sequence  
\[
\begin{array}{l}
0\to \mathrm{H}^1_{\mathrm{dR}}(\Dp)\to \mathrm{H}^1_{\mathrm{dR}}(U_0)\oplus \mathrm{H}^1_{\mathrm{dR}}(U_q)\to 
\mathrm{H}^1_{\mathrm{dR}}(U_0\cap U_q) \to \cdots \\
\\
\to  \mathrm{H}^2_{\mathrm{dR}}(\Dp)\to \mathrm{H}^2_{\mathrm{dR}}(U_0)\oplus \mathrm{H}^2_{\mathrm{dR}}(U_q)\to \mathrm{H}^2_{\mathrm{dR}}(U_0\cap U_q)\to 0.
\end{array}
\]
Since $U_0\cap U_q=\Dp\setminus\mathrm{pr_x}^{-1}(0)\cong \mathrm{Spec}\,k[x^{\pm 1},z]=(\mathbb{A}^1\setminus\{0\})\times \mathbb{A}^1$, we have $\mathrm{H}^2_{\mathrm{dR}}(U_0\cap U_q)=0$. Furthermore, the group $\mathrm{H}^1_{\mathrm{dR}}(U_0\cap U_q)$ is isomorphic to $k$, generated
by the class of the $1$-form $\frac{dx}{x}$. Since on the other hand we have isomorphisms $$\mathbb{A}^2\stackrel{\cong}{\to} U_0,\;(x,u)\mapsto (x,q(x,u),xu) \quad  \textrm{and} \quad \Dq\stackrel{\cong}{\to} U_q, \; (x,y,z)\mapsto (x,yz,z),$$
assertion (3) follows by induction. 

To prove assertion (4), we can assume without loss of generality that $p(z)=z(z-1)s(z)$ for some polynomial $s$ or degree $r-2$ with simple roots and such that $s(0)s(1)\neq 0$. Denote by $U_s$ the affine open complement in $\Dp$ of the curve $\{x=s(z)=0\} \subset \Dp$. It suffices to show that $\omega|_{U_s}$ has non-zero class in $\mathrm{H}^2_{\mathrm{dR}}(U_s)$. 
Since the morphism $$f\colon D_{z(z-1)}\to \Dp, \; (x,y,z)\mapsto (x,y,z)\mapsto (x,q(z)y,z)$$ is an open immersion with image $U_s$, 
it is equivalent to show that the class of the volume form $f^*\omega$ in $\mathrm{H}^2_{\mathrm{dR}}(D_{z(z-1)})\cong k$ is non-zero. The surface $S=D_{z(z-1)}$ is covered by the two affine open subsets 
$$ S_{0}=S\setminus \{x=z-1=0\}\cong \mathrm{Spec}\,k[x,u_0] \; \textrm{and} \; S_{1}=S\setminus \{x=z=0\}\cong \mathrm{Spec}\,k[x,u_1]$$ where $u_0=z/x=y/(z-1)$ and $u_1=(z-1)/x=y/z$. Under the above isomorphisms,  $f^*\omega|_{S_0}$ and $f^*\omega|_{S_1}$ are equal to the volume forms $dx\wedge du_0$ and $dx\wedge du_1$,  respectively. Since $S_0$ and $S_1$ are both isomorphic to $\mathbb{A}^2$, the connecting homomorphism $$\delta\colon  \mathrm{H}^1_{\mathrm{dR}}(S_0\cap S_1)\to \mathrm{H}^2_{\mathrm{dR}}(S)$$ is an isomorphism. Noting that $u_1=u_0+\frac{1}{x}$, we see that the class of $f^*\omega$ is equal to the image under $\delta$ of the class of the $1$-form $\frac{dx}{x}$ on $S_0\cap S_1\cong \mathrm{Spec}\,k[x^{\pm 1},z]$. Since the latter is a generator of $\mathrm{H}^1_{\mathrm{dR}}(S_0\cap S_1)\cong k$, assertion (4) follows.
\end{proof}

It follows from Proposition \ref{prop:DanBasics1}(3) that for the symplectic affine surface $(\Dp,\omega)$, we have  $\mathrm{VP}_\omega(\Dp)=\mathcal{H}_\omega(\Dp)$, and hence that $\langle \mathrm{LNV}(\Dp)\rangle=\langle \mathcal{H}_\omega\mathrm{LNV}(\Dp)\rangle$. In the natural decomposition of $\O(\Dp)$ into a direct 
sum of $k$-vector spaces 
\begin{equation}\label{formula5}
\O(\Dp) = xk[x,z] \oplus yk[y, z] \oplus k[z],
\end{equation}
the associated Poisson bracket on  $\O(\Dp)$ is uniquely determined by the three values 
\begin{equation}\label{PoissonDp}
 \{x,z\}=x, \{x,y\}=p'(z),\; \textrm{and} \; \{y,z\}=-y
 \end{equation}
and extension by linearity and the Leibniz rule. 

\subsubsection{Locally nilpotent vector fields on Danielewski surfaces}
The tangent sheaf $\mathcal{T}_{\Dp}$ of a Danielewski surface $\Dp$ is a free $\mathcal{O}_{\Dp}$-module of rank $2$ globally generated as an $\mathcal{O}_{\Dp}$-module by the vector fields 
\begin{equation}\label{formula1}
 \theta_x=p'(z) \frac{\partial}{\partial y} + x \frac{\partial}{\partial z}, \; \theta_y:=-p'(z) \frac{\partial}{\partial x} - y \frac{\partial}{\partial z}, \;
\theta_z=-x\frac{\partial}{\partial x} +y \frac{\partial}{\partial y},
\end{equation}
(see \eqref{PoissonIso} for the notation) with the unique relation $x\theta_y + y\theta_x = p'(z)\theta_z$. Note that the vector fields $\theta_x$ and $\theta_y$ are locally nilpotent and belong to $\mathcal{E}_\omega(\Dp)$. 

\medskip

The following result is an extension of \cite[Theorem 3.26]{KL13} and \cite[Theorem 1]{LR19}. 

\begin{proposition} \label{LieDpEqualities}For a Danielewski surface $\Dp$, the following hold
\begin{enumerate}
    \item $
E_\omega(\Dp) =xk[x,z]\oplus yk[y,z]\oplus \{ (r(z)p(z))' \mid r\in k[z] \}$.
    \item $\mathcal{E}_\omega(\Dp)=[\mathcal{H}_\omega(\Dp),\mathcal{H}_\omega(\Dp)]=\lielnd{\Dp}$. Furthermore, this Lie algebra is simple. 
\end{enumerate}
\end{proposition}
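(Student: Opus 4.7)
To prove the explicit formula for $E_\omega(\Dp)$, I would first establish the inclusion ``$\supseteq$'' by writing down primitive $1$-forms. Using the global identity $dx\wedge dz=x\omega$ on $\Dp$ (from $\omega=dx\wedge dz/x$), the computation $d(x^a z^b\,dz)=a\,x^a z^b\,\omega$ places $xk[x,z]$ in $E_\omega(\Dp)$; the symmetric relation $dy\wedge dz=-y\omega$ handles $yk[y,z]$; and for the pure-$z$ summand, combining with $x\,dy+y\,dx=p'(z)\,dz$ yields $dx\wedge dy=p'(z)\,\omega$, whence the direct computation $d(r(z)\,x\,dy)=r'(z)p(z)\,\omega+r(z)p'(z)\,\omega=(r(z)p(z))'\,\omega$ shows each $(rp)'$ lies in $E_\omega(\Dp)$. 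For the reverse inclusion I use a codimension count: the $k$-linear isomorphism $\O(\Dp)\to H^0(\Dp,\Lambda^2\Omega_{\Dp/k})$, $h\mapsto h\omega$, identifies $\O(\Dp)/E_\omega(\Dp)$ with $\mathrm{H}^2_{\mathrm{dR}}(\Dp)\cong k^{r-1}$ by Proposition \ref{prop:DanBasics1}(3); a direct check shows $r\mapsto(rp)'$ is injective on $k[z]$ with image of codimension exactly $r-1$ (its cokernel is spanned by $\bar 1,\bar z,\dots,\bar z^{r-2}$), so relative to $\O(\Dp)=xk[x,z]\oplus yk[y,z]\oplus k[z]$ the RHS also has codimension $r-1$ and must coincide with $E_\omega(\Dp)$.

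\textbf{Part (2): identifying the three algebras.} For $\lielnd{\Dp}\subseteq\mathcal{E}_\omega(\Dp)$: every $\mathbb{G}_a$-action on $\Dp$ is symplectic (Lemma \ref{lem:GaSymp}), and since $\mathrm{H}^1_{\mathrm{dR}}(\Dp)=0$ every locally nilpotent vector field $\partial$ is Hamiltonian; the explicit families $\theta_x,\theta_y,f(x)\theta_x=\theta_{F(x)},g(y)\theta_y=\theta_{-G(y)}$ have potentials in $xk[x]+k$ or $yk[y]+k\subseteq E_\omega(\Dp)+k$ by (1), so they lie in $\mathcal{E}_\omega(\Dp)$. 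The reverse inclusion is the heart of the argument: transporting via \eqref{IsoLie} I must show that the Poisson subalgebra $L\subseteq\O(\Dp)/k$ generated by $\overline{k[x]}\cup\overline{k[y]}$ contains $E_\omega(\Dp)/(E_\omega(\Dp)\cap k)$. Starting from \eqref{PoissonDp}, the bracket $\{x^{a+1},y^{b+1}\}$ produces (up to scalar) $x^{a-b}p(z)^{b}p'(z)$ when $a\ge b$ and a symmetric expression when $a<b$; iterating brackets against $x^c$ and $y^c$---exploiting that $\{z,x\}=-x$ and $\{z,y\}=y$ act as $z$-derivatives on the coefficient while shifting $x$- or $y$-degrees---one generates $x^m p^{(j)}(z)$, $y^m p^{(j)}(z)$, and pure polynomials $(p^k)'$ in $z$. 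Since $p',p'',\ldots,p^{(r)}$ span $k[z]_{\le r-1}$, an induction on the $(x,y,z)$-degree then fills in arbitrary coefficients and recovers all of $xk[x,z]$, $yk[y,z]$, and the pure-$z$ summand of $E_\omega(\Dp)$.

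\textbf{Simplicity and $[\mathcal{H}_\omega,\mathcal{H}_\omega]=\mathcal{E}_\omega$.} To show $\mathcal{E}_\omega(\Dp)$ is simple it suffices to prove the Poisson algebra $E_\omega(\Dp)/(E_\omega(\Dp)\cap k)$ is simple. Given a nonzero Poisson ideal $J$, I would use the three-summand decomposition together with the fact that brackets against $x^a$ and $y^b$ strictly decrease a suitable complexity on $x$- and $y$-type components to force $J$ eventually to contain one of $x$, $y$, or a nonzero class of the form $(rp)'$; brackets of such an element against the generators of $L$ then recover all of $E_\omega(\Dp)/k$. The inclusion $[\mathcal{H}_\omega,\mathcal{H}_\omega]\subseteq\mathcal{E}_\omega$ is automatic because $\{f,g\}\omega=df\wedge dg=d(f\,dg)$ is exact, while the reverse follows from the perfectness of the now-established simple Lie algebra $\lielnd{\Dp}=\mathcal{E}_\omega(\Dp)$.

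The main obstacle I expect is the generation step in Part (2)---verifying that iterated Poisson brackets of $\overline{k[x]}$ and $\overline{k[y]}$ really exhaust $E_\omega(\Dp)/k$---which requires a careful inductive scheme simultaneously controlling the $x$-, $y$- and $z$-degrees and systematically exploiting the relation $xy=p(z)$ to convert between them.
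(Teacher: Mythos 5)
Your Part (1) is correct but reaches the hard inclusion by a different route than the paper. Where you verify $xk[x,z]\oplus yk[y,z]\oplus\{(rp)'\}\subseteq E_\omega(\Dp)$ by exhibiting explicit primitive $1$-forms and then conclude equality by matching codimensions against $\mathrm{H}^2_{\mathrm{dR}}(\Dp)\cong k^{\deg p-1}$ from Proposition \ref{prop:DanBasics1}(3), the paper instead computes $\mathrm{Div}_\omega(f\theta_x+g\theta_y+h\theta_z)$ for arbitrary $f,g,h\in\O(\Dp)$ and reads off the image directly from the fact that $\theta_x,\theta_y,\theta_z$ generate $\mathcal{T}_{\Dp}$ as an $\O(\Dp)$-module. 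Your cohomological count is arguably cleaner (and your verification that $r\mapsto (rp)'$ has cokernel of dimension $\deg p-1$ is right in characteristic zero), at the cost of invoking the de Rham computation; the paper's computation is self-contained and also yields the precise description of which vector fields realize which divergences.

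In Part (2), however, you are reproving from scratch what the paper deliberately outsources, and the two steps you leave as sketches are exactly the nontrivial ones. The paper proves $[\mathcal{H}_\omega(\Dp),\mathcal{H}_\omega(\Dp)]=\mathcal{E}_\omega(\Dp)$ directly from the identity $\{xf(z),yg(z)\}=(f(z)g(z)p(z))'$ together with the brackets against $x$ and $y$ and Part (1); it then cites \cite[Theorem 3.26]{KL13} for $\mathcal{E}_\omega(\Dp)=\lielnd{\Dp}$ and \cite[Theorem 1]{LR19} for simplicity. Two concrete gaps in your version: first, for the inclusion $\lielnd{\Dp}\subseteq\mathcal{E}_\omega(\Dp)$ you only check that the particular families $\theta_x$, $\theta_y$, $f(x)\theta_x$, $g(y)\theta_y$ have Hamiltonian potentials in $E_\omega(\Dp)+k$, but $\lielnd{\Dp}$ is by definition generated by \emph{all} locally nilpotent vector fields; knowing that every locally nilpotent vector field is (up to an automorphism of $\Dp$, which rescales $\omega$ by a constant and hence preserves $E_\omega$) of one of these forms requires the Makar-Limanov--Daigle classification of locally nilpotent derivations of $\O(\Dp)$, which you neither invoke nor replace. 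Second, the generation step (that iterated Poisson brackets of $\overline{k[x]}$ and $\overline{k[y]}$ exhaust $E_\omega(\Dp)/k$) and the simplicity argument are stated as inductions on an unspecified complexity; as written they are plausible programs rather than proofs, and you correctly flag the former as the main obstacle. If you intend a self-contained proof you must carry out these inductions; otherwise the honest move, as in the paper, is to cite \cite{KL13} and \cite{LR19}. Your derivation of $[\mathcal{H}_\omega,\mathcal{H}_\omega]\supseteq\mathcal{E}_\omega$ from perfectness of the simple algebra is fine, but note it makes this equality logically dependent on the simplicity claim, whereas the paper obtains it independently by the explicit bracket computation.
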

\begin{proof}
By \eqref{formula5}, every  element $e\in \O(\Dp)$ admits a unique expression in the form  $$e=xe_x(x,z)+ye_y(y,z)+e_z(z).$$ For every $f,g,h \in \O(\Dp)$, we have 
\[
\begin{array}{rcl}
\mathrm{Div}_\omega(f\theta_x+g\theta_y+h\theta_z) & =& p'(z)(-\frac{\partial f}{\partial y}+\frac{\partial g}{\partial x})+x(-\frac{\partial f}{\partial z}+\frac{\partial h}{\partial x})-y(-\frac{\partial g}{\partial z}+\frac{\partial h}{\partial y}) \\
& = & xu(x,z)+yv(y,z)+ ((g_x(0,z)-f_y(0,z))p(z))'.
\end{array}
\]
Every element of $\O(\Dp)$ of the form $xu(x,z)$ (resp. $yv(y,z)$ is equal to $\mathrm{Div}_\omega(f\theta_x)$ (resp. $\mathrm{Div}_\omega(g\theta_y)$) for some element $f$ of the form $xr(x,z)$ (resp. $g$ of the form $yr(y,z))$). Since $H^0(\Dp,\mathcal{T}_{\Dp})$ is generated as an $\O(\Dp)$-module by $\theta_x$, $\theta_y$ and $\theta_z$, we conclude that $E_\omega(\Dp)\subset \O(\Dp)$ consists of all elements whose component in $k[z]$ is of the form $(r(z)p(z))'$ for some polynomial $r(z)\in k[z]$.

Using the decomposition \eqref{formula5}, the definition \eqref{PoissonDp} and the identity \[\begin{array}{rcl}
\{xf(z),yg(z)\} & = & xg(z)\{f(z),y\}+yf(z)\{x,g(z)\}+f(z)g(z)\{x,y\} \\
& = & xg(z)f'(z)\{z,y\}+ yf(z)g'(z)\{x,z\}+f(z)g(z)\{x,y\} \\
& = & xy(f(z)g(z))'+f(z)g(z)p'(z) \\
& = & (f(z)g(z)p(z))',
\end{array}
\]
it is then straightforward to check that $\{\O(\Dp),\O(\Dp)\}=E_\omega(\Dp)$, and hence, by definition, that $\mathcal{E}_\omega(\Dp)=[\mathcal{H}_\omega(\Dp),\mathcal{H}_\omega(\Dp)]$.

The equality $\mathcal{E}_\omega(\Dp)=\lielnd{\Dp}$ follows from the proof of Theorem 3.26 in \cite{KL13}, given there over $k=\mathbb{C}$,  but which remains valid over any algebraically closed field of characteristic zero. Finally, the simplicity of the Lie algebra $\lielnd{\Dp}$ is established over $k=\mathbb{C}$ in \cite{LR19}, but again the proof carries on verbatim to the case of an arbitrary algebraically closed field of characteristic zero. 
\end{proof}

Let $\Dp$ be a Danielewski surface. If $\deg p=1$, then  the projection $\mathrm{pr}_{x,y}\colon\Dp\to \mathbb{A}^2$ is a symplectomorphism between
 $(\Dp,\omega)$ and $(\mathbb{A}^2, dx\wedge dy)$.  By Example \ref{ex:sympA2}, the bracket width of  $\lielnd{\Dp}\cong \mathcal{H}_{dx\wedge dy}(\mathbb{A}^2)$ equals one. 
 
\begin{corollary} Let $\Dp$ be a Danielewski surface with $\deg p\geq 2$. Then $\lielnd{\Dp}$ is a Lie ideal of codimension $\deg p -2$ of $\mathcal{H}_\omega(\Dp)$, with quotient isomorphic to $\mathrm{H}^2_{\mathrm{dR}}(\Dp)/\langle \overline{\omega} \rangle $.
\end{corollary}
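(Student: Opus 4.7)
The proof plan is to translate everything through the Poisson model and then read off the codimension from the de Rham computations already at hand.

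First I would invoke Proposition \ref{LieDpEqualities}(2), which identifies $\lielnd{\Dp}$ with $\mathcal{E}_\omega(\Dp)$. As recalled at the end of the subsection on Hamiltonian vector fields, $\mathcal{E}_\omega(\Dp)$ is a Lie ideal of $\mathcal{H}_\omega(\Dp)$, so the ideal property is free. It remains to identify the quotient. The Lie algebra isomorphism $\Theta_\omega\colon \mathcal{O}(\Dp)/k \xrightarrow{\sim} \mathcal{H}_\omega(\Dp)$ of \eqref{IsoLie} maps $E_\omega(\Dp)$ onto $\mathcal{E}_\omega(\Dp)$ by the very definition of the latter, so it induces a $k$-linear isomorphism
\[
\mathcal{H}_\omega(\Dp)/\mathcal{E}_\omega(\Dp)\;\cong\;\mathcal{O}(\Dp)\big/\bigl(E_\omega(\Dp)+k\bigr).
\]

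Next I would use the interpretation of $E_\omega(\Dp)$ given earlier in the section: since $\Psi_\omega$ is an isomorphism and $\mathcal{L}_\xi\omega=d(i_\xi\omega)=(\mathrm{Div}_\omega\xi)\omega$, the subspace $E_\omega(\Dp)\subset \mathcal{O}(\Dp)$ consists precisely of those $h$ for which $h\omega$ is an exact $2$-form. Hence the $k$-linear map $\mathcal{O}(\Dp)\to \mathrm{H}^2_{\mathrm{dR}}(\Dp)$, $f\mapsto [f\omega]$, is surjective (because $H^0(\Dp,\Lambda^2\Omega_{\Dp/k})=\mathcal{O}(\Dp)\omega$) with kernel exactly $E_\omega(\Dp)$, yielding
\[
\mathcal{O}(\Dp)/E_\omega(\Dp)\;\cong\;\mathrm{H}^2_{\mathrm{dR}}(\Dp),\qquad \bar{f}\longmapsto [f\omega].
\]
Quotienting further by the image of $k\subset \mathcal{O}(\Dp)$, which is spanned by $[\omega]=\overline{\omega}$, gives exactly
\[
\mathcal{H}_\omega(\Dp)/\mathcal{E}_\omega(\Dp)\;\cong\;\mathrm{H}^2_{\mathrm{dR}}(\Dp)\big/\langle \overline{\omega}\rangle.
\]

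Finally I would compute the codimension. By Proposition \ref{prop:DanBasics1}(3), $\dim_k\mathrm{H}^2_{\mathrm{dR}}(\Dp)=\deg p -1$, and by Proposition \ref{prop:DanBasics1}(4), the class $\overline{\omega}$ is nonzero as soon as $\deg p\geq 2$, so $\langle \overline{\omega}\rangle$ is a one-dimensional subspace. The resulting quotient therefore has dimension $\deg p -2$, which is the asserted codimension. There is no substantial obstacle here: the one point requiring care is that the generator $\overline{\omega}$ of the image of $k$ really is a nontrivial class of $\mathrm{H}^2_{\mathrm{dR}}(\Dp)$, and this is precisely what Proposition \ref{prop:DanBasics1}(4) supplies; without the hypothesis $\deg p\geq 2$ one would instead land in the situation of the affine plane, where $\mathrm{H}^2_{\mathrm{dR}}=0$ and the whole quotient collapses, consistently with the width-one statement recalled just before the corollary.
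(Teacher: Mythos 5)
Your proposal is correct and follows essentially the same route as the paper: identify $\lielnd{\Dp}$ with $\mathcal{E}_\omega(\Dp)$ via Proposition \ref{LieDpEqualities}, pass to the Poisson model $\mathcal{O}(\Dp)/E_\omega(\Dp)\cong \mathrm{H}^2_{\mathrm{dR}}(\Dp)$, and use Proposition \ref{prop:DanBasics1}(3)--(4) to get the codimension $\deg p-2$. Your version is slightly more careful than the paper's in writing the quotient as $\mathcal{O}(\Dp)/(E_\omega(\Dp)+k)$ rather than the paper's shorthand $E_\omega(\Dp)/k$, but the argument is the same.
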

\begin{proof}
By Proposition \ref{prop:DanBasics1}(3) and Proposition \ref{LieDpEqualities}, $E_\omega(\Dp)$ is a proper ideal of codimension $(\deg p-1)$ 
of the Lie algebra $(\O(\Dp),\{\cdot, \cdot\})$.  Since by  Proposition \ref{prop:DanBasics1}(4) the class $\overline{\omega}$ of the $2$-form $\omega$ in $\mathrm{H}^2_{\mathrm{dR}}(\Dp)$
is nonzero, Proposition \ref{LieDpEqualities} implies that $\lielnd{\Dp}=\mathcal{E}_\omega(\Dp)\cong E_{\omega}(\Dp)/k$ is a Lie ideal of $\mathcal{H}_\omega(\Dp)\cong \O(\Dp)/k$ with quotient isomorphic to $\mathrm{H}^2_{\mathrm{DR}}(\Dp)/\langle \overline{\omega} \rangle $.
\end{proof}

The case where $\deg p=2$ is special since we have $\mathcal{H}_\omega(\Dp)=\lielnd{\Dp}=\mathcal{E}_\omega(\Dp)$. 
\begin{proposition}\label{danielewskiwidthzero} Let $\Dp$ be a Danielewski surface with $\deg p=2$. Then the width of the simple Lie algebra $\lielnd{\Dp}$ is at most two.
\end{proposition}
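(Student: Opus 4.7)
The plan is to exploit the Lie algebra isomorphism $\Theta_\omega$ of \eqref{IsoLie} together with the equality $\lielnd{\Dp}=\mathcal{H}_\omega(\Dp)$ which, for $\deg p=2$, follows from the previous corollary (the codimension $\deg p-2$ being zero). It then suffices to show that every $e\in\O(\Dp)$ can be written, modulo $k$, as the sum of two Poisson brackets $\{\cdot,\cdot\}_\omega$, since $\Theta_\omega$ is a homomorphism of Lie algebras sending $\{a,b\}_\omega$ to $[\theta_a,\theta_b]$ and having kernel $k$.

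Given $e\in\O(\Dp)$, I would decompose it according to \eqref{formula5} as $e=xf(x,z)+yg(y,z)+h(z)$ with $f\in k[x,z]$, $g\in k[y,z]$, $h\in k[z]$, and then treat the first two summands with one Poisson bracket and the last summand with another. For the first bracket, the defining relations $\{x,z\}_\omega=x$, $\{y,z\}_\omega=-y$ from \eqref{PoissonDp} combined with the Leibniz rule yield the identity
\[
\{H(x,z)+\tilde H(y,z),z\}_\omega=x\,\partial_x H-y\,\partial_y \tilde H
\]
for any $H\in k[x,z]$ and $\tilde H\in k[y,z]$. Taking $H(x,z)=\int_0^x f(t,z)\,dt$ and $\tilde H(y,z)=-\int_0^y g(t,z)\,dt$ therefore produces $\{H+\tilde H,z\}_\omega=xf+yg$ as a single bracket.

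For the second bracket, the formula $\{xr(z),y\}_\omega=(r(z)p(z))'$ derived in the proof of Proposition \ref{LieDpEqualities} is the key tool. The point specific to $\deg p=2$ is that the polynomials $(z^kp)'$, $k\ge 0$, have strictly increasing degrees $k+1$, so the set $\{(rp)'\colon r\in k[z]\}$ exhausts $zk[z]$. Hence there exists $r\in k[z]$ with $(rp)'=h(z)-h(0)$, producing the second bracket $\{xr(z),y\}_\omega=h-h(0)$. Combining gives $e\equiv \{H+\tilde H,z\}_\omega+\{xr,y\}_\omega\pmod k$, which via $\Theta_\omega$ expresses $\theta_e$ as $[\theta_{H+\tilde H},\theta_z]+[\theta_{xr},\theta_y]$ in $\lielnd{\Dp}$.

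There is no serious obstacle in this argument; what requires attention is the point where the hypothesis $\deg p=2$ is genuinely used, namely the surjectivity of $r\mapsto (rp)'$ onto $zk[z]$. For $\deg p\ge 3$ this map lands in the subspace of polynomials of degree $\ge\deg p-1\ge 2$, leaving a nontrivial finite-dimensional complement in $zk[z]$ that cannot be filled by a single bracket of the form $\{xr(z),y\}_\omega$; this is consistent with and indeed points toward the higher bracket width asserted in Theorem B(ii).
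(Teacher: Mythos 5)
Your proof is correct and is essentially the paper's own argument: it uses the same decomposition \eqref{formula5}, absorbs the part $xf(x,z)+yg(y,z)$ into a single bracket with $z$ via antiderivatives in $x$ and $y$ (the paper brackets with $z+a\in E_\omega(\Dp)$ instead of $z$, which gives the same vector field), and handles the $k[z]$-part by a bracket of the form $\{xr(z),y\}=(r(z)p(z))'$. One small correction: for $\deg p=2$ the span of the polynomials $(z^kp)'$, $k\geq 0$, is a linear complement of the constants in $k[z]$ but need not equal $zk[z]$ (for $p=z^2+z$ one gets $p'=2z+1\notin zk[z]$), so you should write $(rp)'=h-c$ for some constant $c$ rather than $c=h(0)$; since your final identity is only asserted modulo $k$, this does not affect the argument.
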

\begin{proof}
The assertion is equivalent to the property that every element of $E_\omega(\Dp)$ can be written as the sum of at most two Poisson brackets of elements of $E_\omega(\Dp)$.  
Since $\deg p=2$, it follows from Proposition \ref{LieDpEqualities}(1) that $E_\omega(\Dp)$ contains a polynomial $z+a$ for some $a\in k$. First note that since $\{x,z+a\}=x$ and $\{y,z+a\}=-y$, every element
\[
 f=\sum_{i>0}a_i(z)x^i + \sum_{j > 0}b_j(z)y^j  \in E_\omega(\Dp)
 \]
can be written as $f=\{g,z+a\}$,  where 
 \[
 g =   \sum_{i>0} \frac{1}{i}a_i(z)x^{i} - \sum_{j > 0} \frac{1}{j}b_j(z)y^{j} \in E_\omega(\Dp).
 \]
On the other hand, since $\{x,y\}=p'(z)$, every element of the form $(r(z)p(z))'\in E_\omega(\Dp)$ is equal to the bracket $\{x,yr(z)\}$. Thus,  every element of $E_\omega(\Dp)$ can be written  as the sum $\{g,z+a\}+\{x,yr(z)\}$ of two brackets of elements of $E_\omega(\Dp)$.  
\end{proof}

We did not succeed to decide whether the width of $\lielnd{\Dp}$ for $\deg p=2$ is equal to one or to two. We note the following equivalent formulations:

\begin{lemma} \label{biexact}
For a Danielewski surface  $\Dp$ with $\deg p = 2$, the following are equivalent:
\begin{enumerate}
    \item The width of the Lie algebra $\lielnd{\Dp}$ equals one. 
    \item Every element of $E_\omega(\Dp)$ equals the Poisson bracket $\{f,g\}$ of two elements of $\O(\Dp)$.
    \item Every algebraic $1$-form $\alpha\in H^0(\Dp,\Omega^1_{\Dp/k})$ can be written as $\alpha=fdg +dh$ for some $f,g,h\in \O(\Dp)$.
\end{enumerate}
\end{lemma}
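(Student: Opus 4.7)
The plan is to pass through the Poisson/Hamiltonian dictionary in two steps: first establish $(1)\Leftrightarrow (2)$ via the Lie algebra isomorphism $\Theta_\omega$, and then $(2)\Leftrightarrow (3)$ via the identity $df\wedge dg=\{f,g\}_\omega\cdot \omega$ together with the vanishing of $\mathrm{H}^1_{\mathrm{dR}}(\Dp)$.

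For $(1)\Leftrightarrow (2)$, I would first recall that since $\deg p = 2$, the preceding discussion yields $\mathcal{H}_\omega(\Dp)=\mathcal{E}_\omega(\Dp)=\lielnd{\Dp}$, so the map $\Theta_\omega$ from \eqref{IsoLie} restricts to a surjective Lie algebra homomorphism $E_\omega(\Dp)\to \lielnd{\Dp}$ whose kernel is $k\cap E_\omega(\Dp)$. A short verification using the description of $E_\omega(\Dp)$ in Proposition \ref{LieDpEqualities}(1) shows that $k\cap E_\omega(\Dp)=0$: a nonzero constant would have to lie in $\{(r(z)p(z))':r\in k[z]\}$, which is impossible because for $r\neq 0$ the polynomial $r(z)p(z)$ has degree at least $2$, so its derivative cannot be a nonzero constant. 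Thus $\Theta_\omega$ identifies $(E_\omega(\Dp),\{\cdot,\cdot\}_\omega)$ with $\lielnd{\Dp}$ as Lie algebras, and since $[\theta_f,\theta_g]=\theta_{\{f,g\}_\omega}$ and $\{f,g\}_\omega$ only depends on $f,g$ modulo constants, width one of $\lielnd{\Dp}$ is equivalent to saying that every element of $E_\omega(\Dp)$ is a single Poisson bracket of two elements of $\O(\Dp)$, which is exactly (2).

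For $(2)\Leftrightarrow (3)$, the key input is the identity $df\wedge dg=\{f,g\}_\omega\cdot \omega$ from \eqref{PoissonB}, together with the fact that $h\in \O(\Dp)$ lies in $E_\omega(\Dp)$ if and only if the $2$-form $h\omega$ is exact. For $(3)\Rightarrow (2)$, given $h\in E_\omega(\Dp)$, I would write $h\omega = d\beta$ for some $1$-form $\beta$ on $\Dp$, apply (3) to obtain $\beta = fdg+d\ell$, and conclude $h\omega = d\beta = df\wedge dg = \{f,g\}_\omega\,\omega$, hence $h=\{f,g\}_\omega$. For $(2)\Rightarrow (3)$, given any $1$-form $\alpha$ on $\Dp$, write $d\alpha = c\,\omega$ with $c\in \O(\Dp)$; since $d\alpha$ is exact by construction, $c\in E_\omega(\Dp)$, so by (2) there exist $f,g\in \O(\Dp)$ with $c=\{f,g\}_\omega$, yielding $d\alpha = df\wedge dg = d(fdg)$. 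Then $\alpha - fdg$ is a closed $1$-form, and since $\mathrm{H}^1_{\mathrm{dR}}(\Dp)=0$ by Proposition \ref{prop:DanBasics1}(3), we get $\alpha - fdg = dh$ for some $h\in \O(\Dp)$, giving $\alpha = fdg + dh$ as required.

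I do not expect any substantial obstacle: the argument is essentially a formal translation between three viewpoints, tied together by the dictionary \eqref{PoissonIso}--\eqref{IsoLie}, the divergence interpretation of $E_\omega(\Dp)$, and the vanishing of $\mathrm{H}^1_{\mathrm{dR}}(\Dp)$. The only mildly delicate point is the verification that $k\cap E_\omega(\Dp)=0$, which is specific to $\deg p=2$ and pins down the precise formulation of (2).
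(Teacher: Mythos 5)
Your proof is correct and follows essentially the same route as the paper: the equivalence $(1)\Leftrightarrow(2)$ via the isomorphism $(\O(\Dp)/k,\{\cdot,\cdot\})\cong(\mathcal{H}_\omega(\Dp),[\cdot,\cdot])$ together with $\mathcal{E}_\omega(\Dp)=\lielnd{\Dp}=\mathcal{H}_\omega(\Dp)$, and $(2)\Leftrightarrow(3)$ via the identity $e\omega=df\wedge dg=d(fdg)$ characterizing exactness of $e\omega$. The two points you spell out that the paper leaves implicit --- the verification that $k\cap E_\omega(\Dp)=0$ when $\deg p=2$, and the use of $\mathrm{H}^1_{\mathrm{dR}}(\Dp)=0$ to pass from closedness of $\alpha-fdg$ to exactness --- are both correct and genuinely needed.
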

\begin{proof}
The equivalence between (1) and (2) follows from the isomorphism $(\O(\Dp)/k,\{\cdot,\cdot\})\cong (\mathcal{H}_\omega(\Dp),[\cdot,\cdot])$ and the equality $\mathcal{E}_\omega(\Dp)=\lielnd{\Dp}=\mathcal{H}_\omega(\Dp)$ which holds by Proposition \ref{LieDpEqualities}. Since elements $e$ of $E_\omega(\Dp)$ are precisely those of $\O(\Dp)$ such that the algebraic $2$-form $e\omega$ is exact, say $e\omega=d\alpha$, it follows from the definition of the Poisson bracket that $e=\{f,g\}$ if and only if $e\omega=df\wedge dg=d(fdg)=d\alpha$. The equivalence between (2) and (3) follows.  
\end{proof}

\begin{question}\label{q:Dp2}
Is the the width of the Lie algebra $\lielnd{\Dp}$ of a Danielewski surface $\Dp$ with $\deg p =2$ equal to two? 
\end{question}


We now consider the case of Danielewski surfaces $\Dp$ with $\deg p \geq 3$. The  projection  $\pi=\mathrm{pr}_{x,z}\colon\Dp\to \mathbb{A}^2$ restricts to an isomorphism $\Dp\setminus\{x=0\} \cong \mathbb{A}^2\setminus \{x=0\}$. Every pair of elements $f,g\in \O(\Dp)$ determines a rational map  $$h_{f,g}=(f,g)\circ \pi^{-1}:\mathbb{A}^2 \dashrightarrow \mathbb{A}^2.$$ 
Denote by $\Jac(h_{f,g})\in k[x^{\pm 1},z]$ the unique element such that $h_{f,g}^*\omega_{\mathbb{A}^2}=\Jac(h_{f,g})\omega_{\mathbb{A}^2}$. Viewing $f$ and $g$ as elements of $\O(\Dp)_x\cong k[x^{\pm 1},z]$ via the injective localization homomorphism $\O(\Dp)\hookrightarrow \O(\Dp)_x$ with image $k[x,z,x^{-1}p(z)]$, we have $$\Jac(h_{f,g})=\frac{\partial f(x,x^{-1}p(z),z)}{\partial z}\frac{\partial g(x,x^{-1}p(z),z)}{\partial x}-\frac{\partial f(x,x^{-1}p(z),z)}{\partial x}\frac{\partial g(x,x^{-1}p(z),z)}{\partial z}.$$  
We introduce the following condition reminiscent to the Jacobian conjecture for $\mathbb{A}^2$.

\begin{hyp} If $\Jac (h_{f,g})\in k^*$ then $h_{f,g}$ is a biregular automorphism of $\mathbb{A}^2$. 
\end{hyp}

\begin{theorem}\label{main1} Let $\Dp$ be a Danielewski surface with $\deg p \geq 3$. Assume that Hypothesis $\text{\rm{(J)}}$ holds for $\Dp$. Then 
the width of the simple Lie algebra $\lielnd{\Dp}$ is greater than one.
\end{theorem}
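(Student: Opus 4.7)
The plan is to argue by contradiction: assume every element of $E_{\omega}(\Dp)$ is a single Poisson bracket of two elements of $\O(\Dp)$, and derive an impossibility from a carefully chosen test element.

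The key computational tool is the identification of $\O(\Dp)$ with its image in the localization $\O(\Dp)_{x}\cong k[x^{\pm 1},z]$ under $y\mapsto p(z)/x$. A direct computation from the Poisson relations \eqref{PoissonDp} yields in these coordinates the identity
\[
\{f,g\}=x\cdot \Jac(h_{f,g}),
\]
where $\Jac(h_{f,g})$ is the ordinary Jacobian of $h_{f,g}=(f,g)\circ\pi^{-1}\colon\mathbb{A}^{2}\dashrightarrow\mathbb{A}^{2}$ with respect to the coordinates $x,z$. The equation $\{f,g\}=e$ therefore translates to $\Jac(h_{f,g})=e/x$ in $k[x^{\pm 1},z]$, subject to the integrality constraint that $f$ and $g$ lift to $\O(\Dp)$ (concretely, in the Laurent expansion in $x$ of each, the coefficient of $x^{-k}$ must lie in $p(z)^{k}k[z]$ for every $k\ge 1$).

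I would then select a specific test element $e\in E_{\omega}(\Dp)$ whose shape, combining appropriately the three summands of the decomposition \eqref{formula5}, forces any hypothetical decomposition $e=\{f,g\}$ to produce, possibly after combination with auxiliary brackets known to be achievable, a pair $(f',g')$ with $\Jac(h_{f',g'})\in k^{*}$. Hypothesis (J) then asserts that $h_{f',g'}$ is a biregular automorphism of $\mathbb{A}^{2}$, which in turn implies $f',g'\in k[x,z]\subset\O(\Dp)$ and reduces the bracket to $\{f',g'\}=x\cdot \Jac(f',g')\in k^{*}\cdot x$. Translating the resulting rigidity back to the original element $e$ then contradicts the shape of $e$ prescribed by Proposition \ref{LieDpEqualities}(1).

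The main obstacle is the simultaneous choice of $e$ and of the manipulation producing $(f',g')$, so as to guarantee that the constant-Jacobian case of Hypothesis (J) is genuinely applicable. The condition $\deg p\ge 3$ is essential here: the codimension of $E_{\omega}(\Dp)\cap k[z]$ inside $k[z]$ equals $\deg p-1\ge 2$ (the interpolation condition that some antiderivative of the element be constant on the roots of $p$), and it is this additional dimension of obstructions that provides the room to construct a rigid enough test element. For $\deg p=2$ this extra dimension collapses, consistent with Proposition \ref{danielewskiwidthzero} only bounding the width by two and with Question \ref{q:Dp2} being left open.
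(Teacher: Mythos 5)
Your setup is the right one and matches the paper's: localize at $x$, observe that $\{f,g\}=\pm x\cdot\Jac(h_{f,g})$, and use Hypothesis (J) to upgrade a constant Jacobian to a biregular automorphism of $\mathbb{A}^2$. But the proof has a genuine gap exactly where you flag "the main obstacle": the test element is never produced, and the contradiction you describe cannot be reached along the route you sketch. First, note that the only elements of $E_\omega(\Dp)$ whose hypothetical bracket presentation yields a \emph{constant} Jacobian are the scalar multiples of $x$, since $\{f,g\}=x\Jac(h_{f,g})$; so once Hypothesis (J) applies you necessarily have $e'\in k^*x$, and the conclusion $f',g'\in k[x,z]$, $\{f',g'\}\in k^*x$ is perfectly consistent with "the shape of $e'$" --- there is no contradiction at that stage. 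Second, your contradiction hypothesis allows $f,g$ to be arbitrary elements of $\O(\Dp)$, and under that relaxation the statement you want to refute is simply true: $x=\{x,z\}$ with $x,z\in\O(\Dp)$. The point is that the width of $\lielnd{\Dp}=\mathcal{E}_\omega(\Dp)$ concerns brackets of elements of $E_\omega(\Dp)$ itself, and for $\deg p\geq 3$ one has $z\notin E_\omega(\Dp)$ (its $k[z]$-component would have to be $(r(z)p(z))'$, which has degree $0$ or $\geq \deg p-1\geq 2$, never $1$). The membership $f,g\in E_\omega(\Dp)$ is precisely the constraint that must be exploited.

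The paper's proof closes both gaps at once: it takes $e=x$ (i.e., the locally nilpotent field $\theta_x$) as the test element, so that $x=\{f,g\}$ forces $\Jac(h_{f,g})=1$ with no auxiliary manipulation; Hypothesis (J) then makes $h_{f,g}$ an automorphism of $\mathbb{A}^2$, which lifts to an automorphism $\varphi$ of $\Dp$ satisfying $\varphi^*\omega=\lambda\omega$ for some $\lambda\in k^*$ (using $\O(\Dp)^*=k^*$) and exchanging $g$ with the coordinate $z$. Since $E_\omega(\Dp)$ is stable under such automorphisms, $g\in E_\omega(\Dp)$ forces $z\in E_\omega(\Dp)$, contradicting the description of $E_\omega(\Dp)\cap k[z]$ when $\deg p\geq 3$. (Your observation that Hypothesis (J) gives $f,g\in k[x,z]$ is correct and could be used for an alternative ending --- reducing mod $x$ and comparing degrees in $z$ --- but as written your argument stops short of any such step.) Your heuristic about the codimension of $E_\omega(\Dp)\cap k[z]$ correctly locates where $\deg p\geq 3$ enters, but it is not substantiated by an actual argument in the proposal.
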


\begin{proof}
We will show that the locally nilpotent vector field $\theta_{x}$ cannot be represented as a single bracket of elements of $\lielnd{\Dp}$. By Proposition  \ref{LieDpEqualities}, this is equivalent to showing that the element $x\in E_\omega(\Dp)$ cannot be written as the Poisson bracket of two elements of $E_\omega(\Dp)$. 

Assume to the contrary that there exist $f,g\in E_\omega(\Dp)$ such that $x=\{f,g\}$. Then by combining the definition of the Poisson bracket with the equality $(f,g)=h_{f,g}\circ \pi$, we obtain:
\[
\begin{array}{rcl}
x\omega= df\wedge dg=(f,g)^*\omega_{\mathbb{A}^2} & = &   \pi^*h_{f,g}^*\omega_{\mathbb{A}^2} \\
& = &\pi^*(\Jac(h_{f,g})\omega_{\mathbb{A}^2})\\
& =& (\pi^*\Jac(h_{f,g}))dx\wedge dz=(\pi^*\Jac(h_{f,g}))x\omega.
\end{array}
\]
Since $\pi\colon\Dp\to \mathbb{A}^2$ is dominant, it follows that $\Jac(h_{f,g})=1$ and hence, by hypothesis (J), that $h_{f,g}$ is a biregular automorphism of $\mathbb{A}^2$. The composition $h_{f,g}\circ \pi$ then induces an isomorphism between $\O(\Dp)$ and the sub-algebra $k[f,g,f^{-1}p(g)]$ of $k[f^{\pm 1},g]$. This implies in turn that the map $\varphi\colon\Dp\to \Dp$, $(x,y,z)\mapsto (f,f^{-1}p(g),g)$,  is an automorphism of $\Dp$ which makes the following diagram commutative
\begin{equation}
\begin{tikzcd}
  \Dp \arrow{d}[swap]{\varphi} \arrow{r}{\pi} & \mathbb{A}^2 \arrow{d}{h_{f,g}} \\
  \Dp \arrow{r}{\pi} & \mathbb{A}^2.
 \end{tikzcd}
 \end{equation}
By construction, we have  $\varphi^*g=z$, where, by hypothesis, $g$ is an element of $E_\omega(\Dp)$. Recall that by definition of $E_\omega(\Dp)$, this means that $g=\mathrm{Div}_\omega\xi$ for some $\xi\in \Ve(\Dp)$, equivalently that $d(i_\xi\omega)=g\omega$. By Proposition \ref{prop:DanBasics1}(1), there exists $\lambda \in k^*$ such that $\varphi^*\omega=\lambda \omega$.  
Let $d\varphi\colon\mathcal{T}_{\Dp}\to \varphi^*\mathcal{T}_{\Dp}$ be the tangent homomorphism to $\varphi$, and let $\xi'=(d\varphi)^{-1}(\varphi^*\xi)\in \Ve(\Dp)$ be the pullback of $\varphi^*\xi\in H^0(\Dp,\varphi^*\mathcal{T}_{\Dp})$. We then have 
\[
\begin{array}{rcl}
 (\mathrm{Div}_\omega  \xi') \omega=d(i_{\xi'}\omega) & = & d(\varphi^*(i_\xi( (\varphi^{-1})^*\omega)))=d(\varphi^*(i_\xi(\lambda^{-1}\omega)))\\
 & = & d(\varphi^*(\lambda^{-1}i_\xi\omega)) = \varphi^*(\lambda^{-1}d(i_\xi \omega)) \\
 & =& \varphi^*(\lambda^{-1} g\omega) =  (\varphi^*g) \omega \\ 
 &= & z\omega.
 \end{array}
\]
Thus, $z\in E_\omega(\Dp)$. Since $\deg p\geq 3$, this is impossible because, by Proposition \ref{LieDpEqualities}, every element of $E_\omega(\Dp)\cap k[z]$ is of the form $(r(z)p(z))'$ for some $r(z)\in k[z]$. 
\end{proof}

\section{Concluding remarks}

As the reader may have noticed, the results presented above reflect only first steps in understanding the width of simple Lie algebras. In this short section, we summarize our vision of eventual forthcoming steps to be considered. 
For brevity, below we shorten `algebras of width greater than one' to `wide algebras'. 

The first immediate question to ask is 
\begin{question}
What is the actual width of the algebras appearing 
in Theorems A and B? 
Can it be made as large as possible?
\end{question}

For better understanding of the width behaviour, it is highly 
desirable to enlarge the bank of examples of wide simple Lie algebras. 
A natural way (suggested by \name{Yuly Billig}) for generalizing the examples of Theorem A  is towards the  Krichever--Novikov algebras 
related to vector fields on punctured curves 
\cite{Sch}. 
This class of infinite dimensional algebras is extremely rich. These algebras arise from meromorphic objects (functions,
vector fields, forms of certain weights, matrix valued functions, etc.) which are holomorphic
outside a fixed set of points. This construction includes Lie algebras and associative algebras
(and also superalgebras, Clifford algebras, etc.). In some natural situations algebras belonging to this family are known to be simple (see \cite[Proposition~6.99]{Sch}) and look as a source for eventual wide algebras.

Note that the Lie algebra $L=\Ve (X)$ captures the geometry of $X$ in the sense
that the isomorphism of Lie algebras $\Ve (X)\cong \Ve (Y)$ implies
that $X$ and $Y$ are isomorphic (here $X$ and $Y$ are arbitrary normal varieties), see
\cite{Gra}, \cite{Sie96}. This gives rise to the following vague question. 

\begin{question}
What geometric properties of $X$ imply that $L=\Ve (X)$ is wide?
\end{question}

Pursuing the geometric flavour of the notion of width, 
one can ask 

\begin{question}
Does there exist a Lie-algebraic counterpart of the Barge--Ghys example from \cite{BG} mentioned in the introduction? 
\end{question}

This requires going over to the
category of smooth vector fields on smooth manifolds. 
Note that even simpler looking problems discussed in \cite{LT} are not yet settled being related to subtle 
differential-geometric considerations. 

\medskip

One can also look for additional sources of wide simple Lie algebras.  
A possible candidate (also suggested by \name{Yuly Billig}) is the following one:

\begin{question}
Let $K_2$ denote the Lie algebra obtained from the matrix
$
\left(
\begin{matrix}
 2 & 2 \\ 2 & 2
\end{matrix}
\right)
$
in the same way as Kac--Moody Lie algebras are obtained from generalized Cartan matrices,
see \cite[\S 6]{Kac}. Is $K_2$ wide?
\end{question}

Here is a challenging general question. 

\begin{question}
Do there exist simple Lie algebras of infinite width?
\end{question}

Note that in sharp contrast with the group case, where there 
are examples of finitely generated simple groups of infinite width, the width of any finitely generated Lie algebra is 
finite, see \cite{Rom}. 

\medskip

Finally, one can ask a `metamathematical' question.

\begin{question}
Let $L$ be a `generic' (`random', `typical') simple Lie algebra. Is $L$ wide?
\end{question}

Of course, any eventual answer will heavily depend on what is meant by the euphemisms used in the statement.  
However, the absence of semisimple and nilpotent elements in the Lie algebra $\Ve(C)$ appearing in Theorem A (see Corollary  \ref{no-ss}) is a witness of the absence of any analogue of the triangular decomposition. This gives some evidence for the  following (`metamathematical') working hypothesis: `amorphous' (less structured) simple Lie algebras tend to be wide. In a sense, this is supported by the cases of the  algebras of Cartan type and $\mathrm{VP}_{\omega_{\mathbb{A}^n}}(\mathbb A^n)$ whose automorphisms groups were computed in \cite{Rud} and \cite{Bav17}, see also \cite{KR17}, respectively.

\medskip

\noindent{\it Acknowledgements}. We thank Joseph Bernstein, Yuly Billig, Mikhail Borovoi, Zhihua Chang, Be'eri Greenfeld, Hanspeter Kraft, Leonid Makar-Limanov, Anatoliy Petravchuk, Vladimir Popov, Oksana Yakimova, and Efim Zelmanov for useful discussions regarding various aspects of this work.


\end{document}